
\documentclass[preprint,12pt]{elsarticle}
\usepackage{amsmath}
\usepackage{amssymb}

\setcounter{MaxMatrixCols}{10}

\input{tcilatex}

\begin{document}

\begin{frontmatter}



\title{
FIXED POINT THEOREMS AND APPLICATIONS IN THEORY OF GAMES
}

\author{Monica Patriche}

\address{
University of Bucharest
Faculty of Mathematics and Computer Science
    
14 Academiei Street
   
 010014 Bucharest, 
Romania
    
monica.patriche@yahoo.com }

\begin{abstract}
We introduce the notions of weakly *-concave and weakly naturally quasi-concave 
correspondence and prove fixed point theorems and continuous selection theorems for 
these kind of correspondences. As applications in the game theory, by using a tehnique 
based on a continuous selection, we establish new existence results for the equilibrium of 
the abstract economies. The constraint correspondences are weakly naturally quasi-concave. 
We show that the equilibrium exists without continuity assumptions. 

\end{abstract}

\begin{keyword}
weakly naturally quasi-concave correspondence, \
fixed point theorem, \
continuous selection, \
abstract economy, \
equilibrium. \


\end{keyword}

\end{frontmatter}



\label{}





\bibliographystyle{elsarticle-num}
\bibliography{<your-bib-database>}







\section{Introduction}

\bigskip It is known that the theory of correspondences has very widely
developed and produced many applications, especially during the last few
decades. Most of these applications concern fixed point theory and game
theory. The fixed point theorems are closely connected with convexity. A
considerable number of papers devotes to correspondences on nonconvex and
noncompact domains\ (see e.g. [16], [17], [18]) or to correspondences
without convex values and continuity ([5]).

The aim of this paper is to introduce the notions of weakly *-concave and
weakly naturally quasi-concave correspondence and prove fixed point theorems
and continuous selection theorems for these kind of correspondences. We also
define the correspondences with WNQS and e-WNQS property.

The applications concern the equilibrium theory: we establish new existence
results for the equilibrium of the abstract economies. The constraint
correspondences are weakly concave-like or have WNQS, respectively e-WNQS
property.

For the reader's convenience, we review the main results in the equilibrium
theory, emphasizing that most authors have studied the existence of
equilibrium for abstract economies with preferences represented as
correspondences which have continuity properties. We mention here the
results obtained by W. Shafer and H. Sonnenschein [14], which concern
economies with finite dimensional commodity space and preference
correspondences having an open graph. N. C. Yannelis and N. D. Prahbakar
[21] used selection theorems and fixed-point theorems for correspondences
with open lower sections defined on infinite dimensional strategy spaces.
Some authors developed the theory of continuous selections of
correspondences and gave numerous applications in game theory. Michael's
selection theorem [11] is well-known and basic in many applications. In
[3,4], F. Browder firstly used a continuous selection theorem to prove
Fan-Browder fixed point theorem. Later, N. C. Yannelis and N. D. Prabhakar
[21], H. Ben-El-Mechaiekh [1], X. Ding, W. Kim and K.Tan [6], C.Horvath [9],
T. Husain and E. Taradfar [10], S.Park [12],[13], X. Wu [19], X. Wu and S.
Shen [20], Z. Yu and L. Lin [22] and many others established several
continuous selection theorems with applications.

In this paper, we show that an equilibrium for an abstract economy exists
without continuity assumptions. By using a tehnique based on a continuous
selection, we prove the new equilibrium existence theorem for an abstract
economy.

The paper is organized in the following way: Section 2 contains
preliminaries and notations. The fixed point and the selection theorem are
presented in Section 3. The equilibrium theorems are stated in Section 4.

\section{\textbf{PRELIMINARIES AND NOTATIONS\protect\smallskip \protect%
\medskip }}

Throughout this paper, we shall use the following notations and definitions:

Let $A$ be a subset of a topological space $X$.

1. 2$^{A}$ denotes the family of all subsets of $A$.

2. cl $A$ denotes the closure of $A$ in $X$.

3. If $A$ is a subset of a vector space, co$A$ denotes the convex hull of $A$%
.

4. If $F$, $T:$ $A\rightarrow 2^{X}$ are correspondences, then co$T$, cl $T$%
, $T\cap F$ $:$ $A\rightarrow 2^{X}$ are correspondences defined by $($co$%
T)(x)=$co$T(x)$, $($cl$T)(x)=$cl$T(x)$ and $(T\cap F)(x)=T(x)\cap F(x)$ for
each $x\in A$, respectively.

5. The graph of $T:X\rightarrow 2^{Y}$ is the set Gr$(T)=\{(x,y)\in X\times
Y\mid y\in T(x)\}$

6. The correspondence $\overline{T}$ is defined by $\overline{T}(x)=\{y\in
Y:(x,y)\in $cl$_{X\times Y}$Gr$T\}$ (the set cl$_{X\times Y}$Gr$(T)$ is
called the adherence of the graph of T)$.$

It is easy to see that cl$T(x)\subset \overline{T}(x)$ for each $x\in
X.\medskip $

\begin{lemma}
(see [23]) \textit{Let }$X$\textit{\ be a topological space, }$Y$%
\textit{\ be a non-empty subset of a topological vector space E, \ss\ be a
base of the neighborhoods of 0 in E and }$A:X\rightarrow 2^{Y}.$\textit{\
For each }$V\in $\textit{\ss , let }$A_{V}:X\rightarrow 2^{Y}$\textit{\ be
defined by }$A_{V}(x)=(A(x)+V)\cap Y$\textit{\ for each }$x\in X.$\textit{\
If }$\widehat{x}\in X$\textit{\ and }$\widehat{y}\in Y$\textit{\ are such
that }$\widehat{y}\in \cap _{V\in \text{\ss }}\overline{A_{V}}(\widehat{x}),$%
\textit{\ then }$\widehat{y}\in \overline{A}(\widehat{x}).\medskip $
\end{lemma}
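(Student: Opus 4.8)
The plan is to unwind both the hypothesis and the conclusion through the definition of $\overline{T}$. By definition, $\widehat{y}\in \overline{A}(\widehat{x})$ means exactly that $(\widehat{x},\widehat{y})\in \mathrm{cl}_{X\times Y}\,\mathrm{Gr}(A)$, and likewise $\widehat{y}\in \overline{A_{V}}(\widehat{x})$ means $(\widehat{x},\widehat{y})\in \mathrm{cl}_{X\times Y}\,\mathrm{Gr}(A_{V})$. So the hypothesis says that $(\widehat{x},\widehat{y})$ is adherent to the graph of every $A_{V}$, and I must show it is adherent to the graph of $A$ itself. First I would fix an arbitrary basic neighborhood of $(\widehat{x},\widehat{y})$ in the product topology, of the form $U\times ((\widehat{y}+N_{0})\cap Y)$, where $U$ is an open neighborhood of $\widehat{x}$ in $X$ and $N_{0}$ is a neighborhood of $0$ in $E$; it then suffices to show that this set meets $\mathrm{Gr}(A)$.

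The key step uses the topological vector space structure of $E$. Since subtraction is continuous at $(0,0)$, I can pick a neighborhood of $0$ whose difference set lies in $N_{0}$, and then, because \ss\ is a base of neighborhoods of $0$, choose $V\in $\,\ss\ with $V-V\subseteq N_{0}$. The set $(\widehat{y}+V)\cap Y$ is then a neighborhood of $\widehat{y}$ in $Y$, so $U\times ((\widehat{y}+V)\cap Y)$ is a neighborhood of $(\widehat{x},\widehat{y})$. Applying the hypothesis for this particular $V$, namely $(\widehat{x},\widehat{y})\in \mathrm{cl}\,\mathrm{Gr}(A_{V})$, this neighborhood must meet $\mathrm{Gr}(A_{V})$: there exist $x\in U$ and $z\in A_{V}(x)\cap (\widehat{y}+V)$.

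Then I would unpack $z\in A_{V}(x)=(A(x)+V)\cap Y$, writing $z=a+v$ with $a\in A(x)$ and $v\in V$. Since also $z-\widehat{y}\in V$, I obtain $a-\widehat{y}=(z-\widehat{y})-v\in V-V\subseteq N_{0}$, so that $a\in (\widehat{y}+N_{0})\cap Y$, the membership in $Y$ being automatic from $a\in A(x)\subseteq Y$. Hence $(x,a)\in \mathrm{Gr}(A)$ lies in the chosen neighborhood, proving that it meets $\mathrm{Gr}(A)$. As the basic product neighborhoods of this form constitute a neighborhood base at $(\widehat{x},\widehat{y})$, it follows that $(\widehat{x},\widehat{y})\in \mathrm{cl}_{X\times Y}\,\mathrm{Gr}(A)$, that is, $\widehat{y}\in \overline{A}(\widehat{x})$.

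I expect the only delicate point to be the choice of $V$: the whole argument hinges on selecting a single $V\in $\,\ss\ that simultaneously is small enough for its difference set to be swallowed by the prescribed $N_{0}$ and lets me invoke the hypothesis $\widehat{y}\in \overline{A_{V}}(\widehat{x})$ to produce a graph point of $A_{V}$ inside $\widehat{y}+V$. Everything else is bookkeeping with the definition of $A_{V}$ and the product topology; the translation invariance and the continuity of subtraction in $E$ are precisely what make the ``double perturbation'' $V-V$ collapse back into the single prescribed neighborhood $N_{0}$.
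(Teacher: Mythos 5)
Your proof is correct and complete. The paper itself offers no argument for this lemma (it is quoted from [23]), so there is no internal proof to compare against; your argument --- translating both hypothesis and conclusion into adherence of graphs, choosing $V$ in the base with $V-V\subseteq N_{0}$ via continuity of subtraction, and converting a graph point $(x,z)$ of $A_{V}$ with $z=a+v$ into the graph point $(x,a)$ of $A$ inside the prescribed neighborhood --- is the standard proof of this approximation lemma and is carried out without gaps.
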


\begin{definition}
Let $X$, $Y$ be topological spaces and $T:X\rightarrow 2^{Y}$ be a
correspondence
\end{definition}

\QTP{Body Math}
1. $T$ is said to be \textit{upper semicontinuous} if for each $x\in X$ and
each open set $V$ in $Y$ with $T(x)\subset V$, there exists an open
neighborhood $U$ of $x$ in $X$ such that $T(y)\subset V$ for each $y\in U$.

2. $T$ is said to be \textit{lower semicontinuous} if for each x$\in X$ and
each open set $V$ in $Y$ with $T(x)\cap V\neq \emptyset $, there exists an
open neighborhood $U$ of $x$ in $X$ such that $T(y)\cap V\neq \emptyset $
for each $y\in U$.

3. $T$ is said to have \textit{open lower sections} if $T^{-1}(y):=\{x\in
X:y\in T(x)\}$ is open in $X$ for each $y\in Y.$\medskip

\begin{lemma}
(see [24]).\textit{Let }$X$\textit{\ be a topological space, }$Y$\textit{\
be a topological linear space, and let }$A:X\rightarrow 2^{Y}$\textit{\ be
an upper semicontinuous correspondence with compact values. Assume that the
sets }$C\subset Y$\textit{\ and }$K\subset Y$\textit{\ are closed and
respectively compact. Then }$T:X\rightarrow 2^{Y}$\textit{\ defined by }$%
T(x)=(A(x)+C)\cap K$\textit{\ for all }$x\in X$\textit{\ is upper
semicontinuous.}\medskip 
\end{lemma}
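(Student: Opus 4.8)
The plan is to verify the definition of upper semicontinuity directly. Fix $x_{0}\in X$ and an open set $V\subset Y$ with $T(x_{0})=(A(x_{0})+C)\cap K\subset V$; I must produce an open neighborhood $U$ of $x_{0}$ such that $T(x)\subset V$ for every $x\in U$. The first step is to reformulate the target inclusion in a form to which the upper semicontinuity of $A$ can be applied. Since $T(x)\subset K$ by construction, the inclusion $T(x)\subset V$ is equivalent to $(A(x)+C)\cap(K\setminus V)=\emptyset$. Setting $D:=(K\setminus V)-C$, an elementary manipulation shows that $(A(x)+C)\cap(K\setminus V)=\emptyset$ holds if and only if $A(x)\cap D=\emptyset$, i.e. $A(x)\subset Y\setminus D$.

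The decisive step is to show that $D$ is closed, so that $W:=Y\setminus D$ is open. This is where the hypotheses on $C$ and $K$ enter. The set $K\setminus V=K\cap(Y\setminus V)$ is a closed subset of the compact set $K$, hence compact, while $-C$ is closed because $C$ is closed and $y\mapsto -y$ is a homeomorphism of $Y$. Thus $D=(K\setminus V)+(-C)$ is the sum of a compact set and a closed set. I would invoke the standard fact that in a topological group, and in particular in the topological linear space $Y$, the sum of a compact set and a closed set is closed; if a self-contained argument is preferred, it follows from a routine covering argument that uses the continuity of subtraction together with the compactness of $K\setminus V$. This yields that $W$ is open.

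It then remains to check that $A(x_{0})\subset W$ and to invoke the upper semicontinuity of $A$. If some $a\in A(x_{0})$ were to lie in $D$, then $a=k-c$ for some $k\in K\setminus V$ and $c\in C$, so that $k=a+c\in(A(x_{0})+C)\cap K=T(x_{0})\subset V$, contradicting $k\in K\setminus V$. Hence $A(x_{0})\cap D=\emptyset$, i.e. $A(x_{0})\subset W$. Since $A$ is upper semicontinuous at $x_{0}$ and $W$ is an open set containing $A(x_{0})$, there is an open neighborhood $U$ of $x_{0}$ with $A(x)\subset W$ for all $x\in U$. By the reformulation of the first step, this gives $T(x)\subset V$ for every $x\in U$, which is precisely upper semicontinuity of $T$ at $x_{0}$; as $x_{0}$ is arbitrary, $T$ is upper semicontinuous.

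I expect the only genuine obstacle to be the closedness of $D$: the compact-plus-closed lemma is what forces the simultaneous use of the compactness of $K$ and the closedness of $C$, and it is the single place where the linear (topological-group) structure of $Y$ is essential. It is worth noting that the compactness of the values $A(x)$ is not actually needed in this argument, although it is natural to retain it from the stated hypotheses.
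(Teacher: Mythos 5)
Your argument is correct, but note that there is nothing in the paper to compare it against: the paper states this lemma without proof, citing it to Zheng [24], so your write-up is a self-contained reconstruction rather than an alternative to an in-paper argument. The reconstruction is sound. The reformulation $T(x)\subset V \iff A(x)\cap\bigl((K\setminus V)-C\bigr)=\emptyset$ is verified correctly in both directions (the chain $a=k-c \Rightarrow k=a+c\in (A(x)+C)\cap K$ and its converse), and you correctly isolate the one nontrivial ingredient: that $D=(K\setminus V)+(-C)$ is closed because it is the sum of a compact set and a closed set in a topological group, a standard fact whose covering-argument proof you rightly sketch. Two details deserve praise for being handled carefully: $K\setminus V=K\cap(Y\setminus V)$ is compact as a closed subset of $K$ in the subspace topology, with no Hausdorff assumption needed; and the empty-value cases ($T(x_0)=\emptyset$ or $A(x_0)=\emptyset$) cause no trouble in your reformulation. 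Your closing observation is also accurate and a genuine (if mild) strengthening of the statement as cited: compactness of the values $A(x)$ is never used, since upper semicontinuity of $A$ with respect to the open set $Y\setminus D$ is all the argument requires; the hypotheses that do real work are the compactness of $K$ and the closedness of $C$, exactly as you say.
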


We present the following types of generalized convex functions and
correspondences.\medskip

\begin{definition}
(see [15]) Let $X$ be a convex set in a real vector space, and let $Z$ be
an ordered t.v.s, with a pointed convex cone $C.$ A vector-valued $%
f:X\rightarrow Z$ is said to be \emph{natural quasi }$C-$\emph{convex} on $X$
if $f(\lambda x_{1}+(1-\lambda )x_{2})\in $co$\{f(x_{1}),f(x_{2})\}-C$ for
every $x_{1},x_{2}\in X$ and $\lambda \in \lbrack 0,1].$ This condition is
equivalent with the following condition: there exists $\mu \in \lbrack 0,1]$
such that $f(\lambda x_{1}+(1-\lambda )x_{2})\leq _{C}\mu f(x_{1})+(1-\mu
)f(x_{2}),$ where $x\leq _{C}y$ $\Leftrightarrow $ $y-x\in C.$
\end{definition}

A vector-valued function f is said to be \emph{natural quasi }$C-$\emph{%
concave} on $X$ if $-f$ is natural quasi $C-$convex on $X$.$\medskip $

\begin{definition}
(see [26]) Let $E_{1}$, $E_{2}$ and $Z$ be real Hausdorff topological
vector spaces, $C\subset Z$ be a closed convex pointed cone with int$S\neq
\emptyset $; let $X$ be a nonempty convex subset of $E_{1}$, $T:X\rightarrow
2^{Z}$ be a correspondence. $T$ is said to be \emph{naturally
C-quasi-concave }on $X$, if for any $x_{1}$,$x_{2}\in X$ and $\lambda \in
\lbrack 0,1],$ co$(T(x_{1}),T(x_{2}))\subset $ $T(\lambda x_{1}+(1-\lambda
)x_{2})-C$.
\end{definition}

Let $\Delta _{n-1}=\left\{ (\lambda _{1},\lambda _{2},...,\lambda _{n})\in 
\mathbb{R}^{n}:\overset{n}{\underset{i=1}{\tsum }}\lambda _{i}=1\text{ and }%
\lambda _{i}\geqslant 0,i=1,2,...,n\right\} $ be the standard
(n-1)-dimensional simplex in $\mathbb{R}^{n}.$

\begin{definition}
(see [5])\textit{\ }Let $X$ be a non-empty convex subset of a topological
vector space $E$ \ and $Y$ be a non-empty subset of $E$.\textit{\ }The
correspondence\textit{\ }$T:X\longrightarrow 2^{Y}$ is said to have \emph{%
weakly convex graph} (in short, it is a WCG correspondence) if for each
finite set $\{x_{1},x_{2},...,x_{n}\}\subset X$, there exists $y_{i}\in
T(x_{i})$, $(i=1,2,...,n)$ such that
\end{definition}

\begin{center}
(1.1) $\ $co$(\{(x_{1},y_{1}),(x_{2},y_{2}),...,(x_{n},y_{n})\})\subset $Gr$%
(T)$
\end{center}

\QTP{Body Math}
The relation (1.1) is equivalent to

\begin{center}
(1.2) $\ \ \ \ \overset{n}{\underset{i=1}{\tsum }}\lambda _{i}y_{i}\in T(%
\overset{n}{\underset{i=1}{\tsum }}\lambda _{i}x_{i})$ \ \ \ \ \ \ \ $%
(\forall (\lambda _{1},\lambda _{2},...,\lambda _{n})\in \Delta _{n-1}).$%
\medskip
\end{center}

We introduce the concept of weakly naturally quasi-concave correspondence.

\begin{definition}
Let $X$ be a nonempty convex subset of a topological vector space $E$ \ and $%
Y$ be a nonempty subset of a topological vector space $F$.\textit{\ }The
correspondence\textit{\ }$T:X\longrightarrow 2^{Y}$ is said to be \emph{%
weakly naturally quasi-concave (WNQ) }if for each $n$ and for each finite
set $\{x_{1},x_{2},...,x_{n}\}\subset X$, there exists $y_{i}\in T(x_{i})$, $%
(i=1,2,...,n)$ and $g=(g_{1},g_{2},...,g_{n}):\Delta _{n-1}\rightarrow
\Delta _{n-1}$ a function with $g_{i}$ continuous, $g_{i}(1)=1$ and $%
g_{i}(0)=0$ for each\textit{\ }$i=1,2,...,n$, such that for every $(\lambda
_{1},\lambda _{2},...,\lambda _{n})\in \Delta _{n-1}$, there exists $y=%
\overset{n}{\underset{i=1}{\tsum }}g_{i}(\lambda _{i})y_{i}\in T(\overset{n}{%
\underset{i=1}{\tsum }}\lambda _{i}x_{i}).$
\end{definition}

\begin{remark}
If $g_{i}(\lambda _{i})=\lambda _{i}$ for each $i\in (1,2,...,n)$ and $%
(\lambda _{1},\lambda _{2},...,\lambda _{n})\in \Delta _{n-1},$ we get a
correspondence with weakly convex graph, as it is defined by Ding and He
Yiran in [5]. In the same time, the weakly naturally quasi-concavity is a
weakening of the notion of naturally C-quasi-concavity with $C=\{0\}.$
\end{remark}

\begin{remark}
If $T$ is a single valued mapping, then it must be natural quasi $C$-concave
for $C=\{0\}.$\medskip 
\end{remark}

\begin{example}
Let $T:[0,4]\rightarrow 2^{[-2,2]}$ be defined by $T(x)=\left\{ 
\begin{array}{c}
\lbrack 0,2]\text{ if }x\in \lbrack 0,2); \\ 
\lbrack -2,0]\text{ \ \ if \ }x=2; \\ 
(0,2]\text{ if }x\in (2,4].%
\end{array}%
\right. $
\end{example}

$T$ is neither upper semicontinuous, nor lower semicontinuous in $2.$ $T$
also has not weakly convex graph, since if we consider $n=2,$ $x_{1}=1$ and $%
x_{2}=3,$ we have that co$\{(1,y_{1}),(3,y_{2})\}\nsubseteq $Gr$T$ for every 
$y_{1}\in T(x_{1}),y_{2}\in T(x_{2}).$

We shall prove that $T$ is a weakly naturally quasi-concave correspondence.

1) Let's consider first $n=2.$

\ \ \ a) If $x_{1},x_{2}\in \lbrack 0,2)$ and $x_{1},x_{2}\in (2,4],$ there
exists $y_{1}=2\in T(x_{1})$, $y_{2}=2\in $

$\ \ \ T(x_{2})$ and $g_{i}(\lambda _{i})=\lambda _{i},$ $i=1,2$ such that
for each $(\lambda _{1},\lambda _{2})$ with the property

\ \ \ \ that $\lambda _{1}\geq 0,$ $\lambda _{2}\geq 0$, $\lambda
_{1}+\lambda _{2}=1,$ there exists $y=\overset{2}{\underset{i=1}{\tsum }}%
g_{i}(\lambda _{i})y_{i}\in T(\overset{2}{\underset{i=1}{\tsum }}\lambda
_{i}x_{i}).$

\ \ \ b) If $x_{1}\in \lbrack 0,2)$ and $x_{2}\in (2,4],$ there exists $%
\lambda _{1}^{\ast }\neq 0$ such that $\lambda _{1}^{\ast }x_{1}+(1-\lambda
_{2}^{\ast })x_{2}=$

$\ \ \ \ \ =2.$

\ \ Let's consider $g_{i}:[0,1]\rightarrow \lbrack 0,1]$ continuous
functions such that $g_{i}(1)=1,$

$\ \ g_{i}(0)=0$ for each\textit{\ }$i=1,2$ and $g_{1}(\lambda
_{1})+g_{2}(\lambda _{2})=1$ if $\lambda _{1}+\lambda _{2}=1,$

\ \ defined by

$\ \ g_{1}(\lambda _{1})=\left\{ 
\begin{array}{c}
\frac{1}{\lambda _{1}^{\ast }}\lambda _{1}\text{ if }\lambda _{1}\in \lbrack
0,\lambda _{1}^{\ast }); \\ 
1\text{ \ \ \ if \ \ \ }\lambda _{1}\in \lbrack \lambda _{1}^{\ast },1]%
\end{array}%
\right. $ and

$\ \ g_{2}(\lambda _{2})=\left\{ 
\begin{array}{c}
0\text{ \ \ \ \ \ \ \ \ \ if \ \ \ \ \ \ \ \ \ }\lambda _{2}\in \lbrack
0,1-\lambda _{1}^{\ast }]; \\ 
1-\frac{1}{\lambda _{1}^{\ast }}+\frac{1}{\lambda _{1}^{\ast }}\lambda _{2}%
\text{ if }\lambda _{2}\in (1-\lambda _{1}^{\ast },1].%
\end{array}%
\right. $

\ \ There exists $y_{1}=0$ and $y_{2}=2$ such that

\ \ \ \ \ b1) for $\lambda _{1}\in \lbrack 0,\lambda _{1}^{\ast })$ and $%
\lambda _{2}=1-\lambda _{1},$ $x=\lambda _{1}x_{1}+\lambda _{2}x_{2}\in
(2,x_{2}],$ then

$\ \ \ \ \ T(x)=(0,2]$ and $y=g_{1}(\lambda _{1})y_{1}+g_{2}(\lambda
_{2})y_{2}=\frac{1}{\lambda _{1}^{\ast }}\lambda _{1}y_{1}+(1-\frac{1}{%
\lambda _{1}^{\ast }}\lambda _{1})y_{2}=$

$\ \ \ \ \ (1-\frac{1}{\lambda _{1}^{\ast }}\lambda _{1})2$ $\in
(0,2]=T(\lambda _{1}x_{1}+\lambda _{2}x_{2});$

\ \ \ \ \ b2) for $\lambda _{1}\in (\lambda _{1}^{\ast },1],$ and $\lambda
_{2}=1-\lambda _{1},$ $x=\lambda _{1}x_{1}+\lambda _{2}x_{2}\in \lbrack
x_{1},2),$ then

$\ \ \ \ \ T(x)=[0,2]$ and $y=g_{1}(\lambda _{1})y_{1}+g_{2}(\lambda
_{2})y_{2}=1\times 0+0\times 2=0\in $

$\ \ \ \ \ \in T(\lambda _{1}x_{1}+\lambda _{2}x_{2});$

\ \ \ \ \ b3) If $\lambda _{1}=\lambda _{1}^{\ast },$ $\lambda
_{2}=1-\lambda _{1}^{\ast },$ $x=\lambda _{1}x_{1}+\lambda _{2}x_{2}=2,$
then $T(x)=[-2,0]$

\ \ \ \ \ and $y=g_{1}(\lambda _{1})y_{1}+g_{2}(\lambda _{2})y_{2}=1\times
0+0\times 2=0\in T(2);$

\ \ c) If $x_{1}\in \lbrack 0,2)$ and $x_{2}=2$, there exists $y_{1}=2$, $%
y_{2}=0$ and the continuous

\ \ functions $g_{i}:[0,1]\rightarrow \lbrack 0,1]$ with $g_{i}(1)=1$, $%
g_{i}(0)=0$ for each\textit{\ }$i=1,2$ and

$\ \ g_{1}(\lambda _{1})+g_{2}(\lambda _{2})=1$ if $\lambda _{1}+\lambda
_{2}=1$ such that

\ \ \ \ \ c1) for $\lambda _{1}\in (0,1]$ and $\lambda _{2}=1-\lambda _{1},$ 
$x=\lambda _{1}x_{1}+\lambda _{2}x_{2}\in \lbrack x_{1},x_{2})$, then

$\ \ \ \ \ T(x)=[-2,0]$ and $y=g_{1}(\lambda _{1})y_{1}+g_{2}(\lambda
_{2})y_{2}=g_{1}(\lambda _{1})\times 2+g_{2}(\lambda _{2})\times 0$

$\ \ \ \ \ =g_{1}(\lambda _{1})\times 2\in T(x);$

\ \ \ \ \ c2) for $\lambda _{1}=0$ and $\lambda _{2}=1,$ $x=2,$ then $%
T(2)=[-2.0]$ and

$\ \ \ \ \ y=g_{1}(0)y_{1}+g_{2}(1)y_{2}=0\times 2+1\times 0=0\in T(2);$

\ \ d) If $x_{1}=2$ and $x_{2}\in (2,4],$ there exists $y_{1}=0$, $y_{2}=2$
and the continuous

\ \ functions $g_{i}:[0,1]\rightarrow \lbrack 0,1]$ with $g_{i}(1)=1$, $%
g_{i}(0)=0$ for each\textit{\ }$i=1,2$ and

$\ \ g_{1}(\lambda _{1})+g_{2}(\lambda _{2})=1$ if $\lambda _{1}+\lambda
_{2}=1$ such that

\ \ \ \ \ d1) for $\lambda _{1}=1$ and $\lambda _{2}=0,$ $x=2,$ then $%
T(2)=[-2.0]$ and

$\ \ \ \ \ \ y=g_{1}(1)y_{1}+g_{2}(0)y_{2}=1\times 0+0\times 2=0\in T(2);$

\ \ \ \ \ \ d2) for $\lambda _{1}\in \lbrack 0,1)$ and $\lambda
_{2}=1-\lambda _{1},$ $x=\lambda _{1}x_{1}+\lambda _{2}x_{2}\in
(x_{1},x_{2}] $, then

$\ \ \ \ \ \ T(x)=(0,2]$ and $y=g_{1}(\lambda _{1})y_{1}+g_{2}(\lambda
_{2})y_{2}=g_{1}(\lambda _{1})\times 0+g_{2}(\lambda _{2})\times 2$

$\ \ \ \ \ \ =g_{2}(\lambda _{2})\times 2$ $\in (0,2]=T(x).$

2) The case $n>2$ can be reduced to the case 1).$\medskip $

Now, we introduce the following definitions.$\medskip $

Let $I$ be an index set. For each $i\in I$, let $X_{i}$ be a non-empty
convex subset of a topological linear space $E_{i}$ and denote $X=\underset{%
i\in I}{\tprod }X_{i}$.

\begin{definition}
\textit{Let }$K_{i}$ be a subset of $X$. The correspondence $%
A_{i}:X\rightarrow 2^{X_{i}}$ is said to have the WNQS\textit{-property} on $%
K_{i}$, if there is a weakly naturally quasi-concave correspondence $%
T_{i}:K_{i}\rightarrow 2^{X_{i}}$ such that $x_{i}\notin T_{i}(x)$ and $%
T_{i}(x)\subset A_{i}(x)$ for all $x\in K_{i}.\medskip $
\end{definition}

\begin{definition}
\textit{Let }$K_{i}$ be a subset of $X$. The correspondence $%
A_{i}:X\rightarrow 2^{X_{i}}$ is said to have the e-WNQS\textit{-property}
on $K_{i}$ if for each convex neighborhood $V$ of $0$ in $X_{i},$ there is a
weakly naturally quasi-concave correspondence $T_{i}^{V}:K_{i}\rightarrow
2^{X_{i}}$ such that $x_{i}\notin T_{i}^{V}(x)$ and $T_{i}^{V}(x)\subset
A_{i}(x)+V$ for all $x\in K_{i}.\medskip $
\end{definition}

\begin{definition}
Let $X$ be a nonempty convex subset of a topological vector space $E$ \ and $%
Y$ be a nonempty subset of a topological vector space $F$.\textit{\ }The
correspondence\textit{\ }$T:X\longrightarrow 2^{Y}$ is said to be \emph{%
weakly *-concave } if for each $n$ and for each finite set $%
\{x_{1},x_{2},...,x_{n}\}\subset X$, there exists $y_{i}\in T(x_{i})$, $%
(i=1,2,...,n)$, such that for every $(\lambda _{1},\lambda _{2},...,\lambda
_{n})\in \Delta _{n-1}$, $\overset{n}{\underset{i=1}{\tsum }}\lambda
_{i}y_{i}\subset T(x)$, for each $x\in X.\medskip $
\end{definition}

\QTP{Body Math}
To prove our theorems of equilibrium existence, we need the
following:\medskip

\begin{theorem}
(Wu's fixed point theorem [19]) L\textit{et }$\mathit{I}$\textit{\ be an
index set. For each }$i\in I,$\textit{\ let }$X_{i}$\textit{\ be a nonempty
convex subset of a Hausdorff locally convex topological vector space }$%
E_{i}, $\textit{\ }$D_{i\text{ }}$\textit{a non-empty compact metrizable
subset of }$X_{i}$\textit{\ and }$S_{i},T_{i}:X:=\underset{i\in I}{\tprod }%
X_{i}\rightarrow 2^{D_{i}}$\textit{\ two correspondences with the following
conditions:}
\end{theorem}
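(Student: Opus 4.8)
The plan is to run a coordinatewise Fan--Browder argument and then glue the factors together with a single application of Tychonoff's fixed point theorem on a suitably chosen compact convex set. Since the hypotheses are not displayed in the excerpt, I read them as the standard Fan--Browder-type conditions: for every $i\in I$ and every $x\in X$ one has $\mathrm{co}\,S_i(x)\subset T_i(x)$ with $S_i(x)\neq\emptyset$, and for every $y_i\in D_i$ the lower section $S_i^{-1}(y_i)$ is open in $X$. The target is a point $\widehat{x}\in X$ with $\widehat{x}_i\in T_i(\widehat{x})$ for each $i$; observe that any such point automatically lies in $\prod_{i} D_i$, since each $T_i$ takes values in $2^{D_i}$, so the action really takes place over the compact factors.

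First I would fix the arena. For each $i$ put $K_i=\mathrm{cl}\,\mathrm{co}\,D_i$; since $D_i$ is compact and $E_i$ is a Hausdorff locally convex space, $K_i$ is compact and convex, and (granting $K_i\subset X_i$, which is the point I return to below) the set $K=\prod_i K_i$ is a compact convex subset of the Hausdorff locally convex product $\prod_i E_i$. The condition $S_i(x)\neq\emptyset$ says precisely that $\{S_i^{-1}(y):y\in D_i\}$ is an open cover of $X$, hence of the compact set $K$, so for each $i$ I extract a finite subcover $S_i^{-1}(y^i_1),\dots,S_i^{-1}(y^i_{n_i})$ of $K$. Because $K$ is compact Hausdorff, hence normal, I can choose a continuous partition of unity $\beta^i_1,\dots,\beta^i_{n_i}$ on $K$ subordinate to this subcover, so that $\beta^i_j(x)>0$ forces $x\in S_i^{-1}(y^i_j)$, i.e. $y^i_j\in S_i(x)$.

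Next I would build the selections and finish. Define $f_i\colon K\to K_i$ by $f_i(x)=\sum_{j=1}^{n_i}\beta^i_j(x)\,y^i_j$. This is continuous, its values lie in $\mathrm{co}\{y^i_1,\dots,y^i_{n_i}\}\subset\mathrm{co}\,D_i\subset K_i$, and for each $x$ only the indices with $y^i_j\in S_i(x)$ contribute, so $f_i(x)\in\mathrm{co}\,S_i(x)\subset T_i(x)$. Assembling $f=(f_i)_{i\in I}\colon K\to K$ gives a continuous self-map of the compact convex set $K$, and Tychonoff's fixed point theorem yields $\widehat{x}\in K$ with $f(\widehat{x})=\widehat{x}$. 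Reading off each coordinate, $\widehat{x}_i=f_i(\widehat{x})\in\mathrm{co}\,S_i(\widehat{x})\subset T_i(\widehat{x})$, which is exactly the asserted fixed point.

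The step I expect to be delicate is securing a \emph{compact convex} domain sitting inside $X$ on which to apply Tychonoff, because $X=\prod_i X_i$ is only convex, not compact. Passing to $K=\prod_i \mathrm{cl}\,\mathrm{co}\,D_i$ is what rescues the argument: the key structural fact is that in a Hausdorff locally convex space the closed convex hull of a compact set is compact, so each $K_i$, and hence $K$, is compact; compactness of $K$ is in turn what makes the finite subcover and the partition of unity available, and the selections $f_i$ land in $\mathrm{co}\,D_i\subset K_i$ precisely so that $f$ maps $K$ into $K$. The one hypothesis-level subtlety is that one needs $\mathrm{cl}\,\mathrm{co}\,D_i\subset X_i$ for $S_i,T_i$ to be defined on all of $K$; this is where the compact, metrizable structure of $D_i$ (which keeps the hulls well-behaved) is leaned on, and if it is not granted directly it must be built into the standing assumptions on $X_i$. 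I would treat that containment as the true technical heart of the proof and verify it carefully before invoking Tychonoff.
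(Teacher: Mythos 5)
You have proved a different theorem. The paper states this result (Theorem 3) without proof, as a quotation of Wu's fixed point theorem from [19], and its actual hypotheses are listed immediately after the theorem environment: (i) for each $x\in X$, cl\,co$\,S_{i}(x)\subset T_{i}(x)$ and $S_{i}(x)\neq\emptyset$; (ii) $S_{i}$ is \emph{lower semicontinuous}. You guessed instead that $S_{i}$ has open lower sections ($S_{i}^{-1}(y)$ open in $X$), which is a strictly stronger continuity hypothesis, and your entire argument hinges on it: the very first step, covering a compact set by the sets $S_{i}^{-1}(y)$ and extracting a partition of unity, is unavailable under mere lower semicontinuity. For a single-valued continuous $f$, the map $S(x)=\{f(x)\}$ is lower semicontinuous, yet $S^{-1}(y)=f^{-1}(y)$ is typically closed and not open, so no open cover by lower sections exists. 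The Browder-type partition-of-unity selection you construct thus cannot establish Wu's theorem. The telltale sign is the metrizability of $D_{i}$: it is never genuinely used in your argument (your suggestion that it keeps convex hulls "well-behaved" is not what metrizability does), whereas in Wu's actual proof it is essential, because the continuous selection of cl\,co$\,S_{i}$ is produced by Michael's selection theorem, which needs the l.s.c. correspondence to have closed convex values in a metrizable range. The selection $f_{i}$ then maps into $D_{i}$ itself, and the fixed point of $x\mapsto\prod_{i}\{f_{i}(x)\}$ comes from a Himmelberg/Fan-type theorem (Theorems 4--5 of this paper) applied on the convex set $X$ with image in the compact set $D=\prod_{i}D_{i}$.

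There is also a second, independent gap in your construction: you take $K_{i}=\mathrm{cl}\,\mathrm{co}\,D_{i}$ and assert it is compact because $D_{i}$ is compact and $E_{i}$ is Hausdorff locally convex. That implication is false in general: the closed convex hull of a compact set is compact only under an additional completeness assumption (e.g., $E_{i}$ quasi-complete, as in Banach or Fr\'{e}chet spaces), which is not among the hypotheses. You correctly flagged the containment $\mathrm{cl}\,\mathrm{co}\,D_{i}\subset X_{i}$ as delicate, but the compactness of $K_{i}$ is equally unjustified, and neither problem arises in the selection-based route, since that route never forms convex hulls of $D_{i}$ at all. To repair your write-up you would need to (a) replace the partition-of-unity step by an appeal to Michael's selection theorem, using precisely the metrizability of $D_{i}$ and the closed convex values cl\,co$\,S_{i}(x)\subset T_{i}(x)$, and (b) apply the fixed point theorem on $X$ (or on $D$ via Himmelberg) rather than on $\prod_{i}\mathrm{cl}\,\mathrm{co}\,D_{i}$.
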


(i) \textit{for each }$x\in X,$ clco$S_{i}(x)\subset T_{i}(x)$\textit{\ and }%
$S_{i}(x)\neq \emptyset ,$

(ii)\textit{\ }$S_{i}$\textit{\ is lower semicontinuous.}

\textit{Then, there exists a point }$\overline{x}=\underset{i\in I}{\tprod }%
x_{i}\in D=\underset{i\in I}{\tprod }D_{i}$\textit{\ such that }$\overline{x}%
_{i}\in T_{i}(\overline{x})$\textit{\ for each }$i\in I.\medskip $

The extension of Kakutani's theorem on locally convex spaces is due to Ky
Fan.

\begin{theorem}
(Ky-Fan, [7]) \textit{Let }$Y$\textit{\ be a locally convex space, }$%
X\subset Y$\textit{\ be a compact and convex subset and }$T:X\rightarrow
2^{X}$\textit{\ be an upper semicontinuous correspondence with non-empty
compact convex values. Then, }$T$\textit{\ has a fixed point.\medskip }
\end{theorem}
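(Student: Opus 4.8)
The plan is to obtain this statement---the Fan--Glicksberg extension of Kakutani's theorem---by reducing it to the finite-dimensional case (Brouwer's fixed point theorem) through a partition-of-unity approximation of $T$ by a continuous single-valued map, followed by a compactness argument that exploits upper semicontinuity together with the convexity of the values of $T$ to upgrade approximate fixed points to an exact one. First I would recall that, since $Y$ is locally convex, the origin admits a base $\mathcal{B}$ of open, convex, balanced neighborhoods; the whole construction is indexed by $V\in\mathcal{B}$, and the desired fixed point emerges as a limit of approximate fixed points $x_V$ as $V$ shrinks.

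Fix $V\in\mathcal{B}$. By compactness of $X$ there are finitely many points $x_1,\dots,x_n\in X$ with $X\subset\bigcup_{i=1}^{n}(x_i+V)$. Choose a continuous partition of unity $\{p_i\}_{i=1}^{n}$ subordinate to this cover, so that $p_i(x)>0$ forces $x\in x_i+V$, and pick $y_i\in T(x_i)$, which is possible since the values of $T$ are non-empty. Define $f_V\colon X\to X$ by $f_V(x)=\sum_{i=1}^{n}p_i(x)\,y_i$; this is well defined and lands in $X$ because $X$ is convex and each $y_i\in X$. Setting $C_V=\operatorname{co}\{y_1,\dots,y_n\}$, a finite-dimensional compact convex subset of $X$, the restriction $f_V|_{C_V}\colon C_V\to C_V$ is a continuous self-map of a finite-dimensional compact convex set, so Brouwer's theorem yields a fixed point $x_V=f_V(x_V)=\sum_{i=1}^{n}p_i(x_V)\,y_i$.

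Next I would pass to the limit. By compactness of $X$ the net $(x_V)_{V\in\mathcal{B}}$ has a subnet converging to some $\bar{x}\in X$. To identify $\bar{x}$ as a fixed point, fix an arbitrary $W\in\mathcal{B}$. The set $T(\bar{x})+W$ is open and convex and contains $T(\bar{x})$, so upper semicontinuity of $T$ provides a neighborhood $N$ of $\bar{x}$ with $T(x')\subset T(\bar{x})+W$ for all $x'\in N$. For $V$ far enough along the subnet we have $x_V\in N$, and every index $i$ contributing to the sum, that is with $p_i(x_V)>0$, satisfies $x_i\in x_V+V\subset N$ once $V$ is small; hence $y_i\in T(x_i)\subset T(\bar{x})+W$. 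Since $T(\bar{x})+W$ is convex, $x_V=\sum_i p_i(x_V)\,y_i\in T(\bar{x})+W$, and letting $x_V\to\bar{x}$ gives $\bar{x}\in\operatorname{cl}\bigl(T(\bar{x})+W\bigr)\subset T(\bar{x})+\overline{W}$. Intersecting over all $W\in\mathcal{B}$ and using that $T(\bar{x})$ is compact, hence closed, yields $\bar{x}\in\bigcap_{W}\bigl(T(\bar{x})+\overline{W}\bigr)=T(\bar{x})$, the required fixed point.

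The main obstacle I anticipate is the coordination between the mesh of the cover and the upper-semicontinuity neighborhood in the limiting step: one must guarantee that all the sampling points $x_i$ actually feeding into $x_V=\sum_i p_i(x_V)\,y_i$ lie in $N$, which forces the simultaneous requirements that $x_V$ be close to $\bar{x}$ along the subnet and that $V$ be small enough that $x_i\in x_V+V$ keeps $x_i$ inside $N$. Organizing these two conditions cleanly---rather than the Brouwer application, which is routine---is where the care is needed, together with the final neighborhood-intersection argument that converts the family of approximate inclusions into the exact membership $\bar{x}\in T(\bar{x})$.
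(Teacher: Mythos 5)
The paper does not prove this statement: it is quoted verbatim as Ky Fan's classical theorem (reference [7]) and used as a black box later (in the proofs of Theorems 13 and 15), so there is no internal proof to compare against. Judged on its own, your argument is the standard Fan--Glicksberg reduction to Brouwer via a partition-of-unity approximation, and it is correct. The delicate points are handled soundly: the coordination you flag is resolved exactly as you indicate, by choosing $W'\in\mathcal{B}$ with $\bar{x}+W'+W'\subset N$ and using that along the subnet one eventually has both $x_V\in\bar{x}+W'$ and $V\subset W'$ (the cofinality in the definition of a subnet over $\mathcal{B}$ directed by reverse inclusion gives the latter), while balancedness of $V$ converts $x_V\in x_i+V$ into $x_i\in x_V+V$; and both closure steps legitimately exploit compactness of $T(\bar{x})$, since $\operatorname{cl}\bigl(T(\bar{x})+W\bigr)\subset T(\bar{x})+\overline{W}$ holds because a sum of a compact set and a closed set is closed, and $\bigcap_{W}\bigl(T(\bar{x})+\overline{W}\bigr)=T(\bar{x})$ then follows by separating any point outside the compact set $T(\bar{x})$. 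The only caveat worth recording is that your proof (like the theorem itself) tacitly needs $Y$ Hausdorff, which the paper's statement omits: Hausdorffness is what makes $X$ normal (so the continuous partition of unity exists), gives the finite-dimensional set $C_V=\operatorname{co}\{y_1,\dots,y_n\}$ the Euclidean topology needed for Brouwer, and underwrites both the closedness of $T(\bar{x})$ and the final separation argument. With that standing hypothesis made explicit, your proof is complete.
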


For the case when $X$ is not compact, Himmelberg got the following result.

\begin{theorem}
(Himmelberg, [8]) \textit{Let }$X$\textit{\ be a non-empty convex subset of
a separated locally convex space }$Y$\textit{. Let }$T:X\rightarrow 2^{X}$ 
\textit{be an upper semicontinuous correspondence such that }$T(x)$\textit{\
is closed and convex for each }$x\in X$\textit{, and }$T(X)$\textit{\ is
contained in a compact subset }$C$\textit{\ of }$X$\textit{. Then, }$T$%
\textit{\ has a fixed point.}
\end{theorem}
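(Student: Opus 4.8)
The plan is to reduce this statement to the Ky-Fan theorem stated just above, via a finite-dimensional approximation followed by a limiting argument that exploits the compactness of $C$. The point to keep in mind is that $X$ itself is not assumed compact, and in a merely separated (not quasi-complete) locally convex space even the closed convex hull of $C$ need not be compact; hence one cannot simply restrict $T$ to a compact convex subset of $X$ and apply Ky-Fan directly. The approximation sidesteps this by replacing $T$, relative to each neighborhood of the origin, by a correspondence that lives on a finite-dimensional simplex sitting inside $X$.

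First I would record a preliminary fact. Since $T$ is upper semicontinuous with closed (nonempty) values and the ambient space $Y$ is a separated locally convex space, hence regular, the graph $\mathrm{Gr}(T)$ is closed in $X\times X$. Moreover $T(X)\subset C$ with $C$ compact, and for each $x$ the value $T(x)$ is a closed subset of $C$, hence compact as well. Thus I may work throughout with the compact-valued restriction of $T$.

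Next comes the approximation step, indexed by the family of convex symmetric neighborhoods $V$ of $0$ in $Y$, directed by reverse inclusion. Fix such a $V$. By compactness of $C$, choose a finite set $\{a_1,\dots,a_n\}\subset C\subset X$ with $C\subset\bigcup_{i=1}^n(a_i+V)$, and set $L_V=\mathrm{co}\{a_1,\dots,a_n\}$, a compact convex subset of $X$ since $X$ is convex. With a continuous partition of unity $\{\varphi_i\}$ subordinate to this cover, define the Schauder-type projection $\pi_V\colon C\to L_V$ by $\pi_V(y)=\sum_{i=1}^n\varphi_i(y)a_i$, which satisfies $\pi_V(y)-y\in V$ for every $y\in C$. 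Then define $G_V\colon L_V\to 2^{L_V}$ by $G_V(x)=\mathrm{co}\,\pi_V(T(x))$. One checks that $G_V$ is upper semicontinuous with nonempty compact convex values: $T$ restricted to $L_V$ is u.s.c.\ with compact values, $\pi_V$ is continuous, and taking convex hulls of compact sets in the finite-dimensional space $L_V$ preserves compactness (Carath\'eodory) and upper semicontinuity. Since $L_V$ is a compact convex subset of $Y$, the Ky-Fan theorem yields a fixed point $x_V\in G_V(x_V)$.

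Finally I would pass to the limit. Writing $x_V=\sum_j t_j\,\pi_V(y_j)$ with $y_j\in T(x_V)$, $t_j\geq 0$, $\sum_j t_j=1$, convexity of $T(x_V)$ gives $z_V:=\sum_j t_j y_j\in T(x_V)\subset C$, while convexity of $V$ and the estimate $\pi_V(y_j)-y_j\in V$ yield $x_V-z_V\in V$. Because $C$ is compact, the net $(z_V)_V$ admits a subnet converging to some $x^*\in C\subset X$; along this subnet $x_V\to x^*$ as well, since $x_V-z_V\in V$ and $V$ shrinks to $0$. As $z_V\in T(x_V)$, $x_V\to x^*$, $z_V\to x^*$, and $\mathrm{Gr}(T)$ is closed, we conclude $x^*\in T(x^*)$. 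The heart of the argument, and the step I expect to be the main obstacle, is the construction of the projections $\pi_V$ with the approximation property $\pi_V(y)-y\in V$ together with the verification that the net $(z_V)$ and its subnet converge; once these are secured, Ky-Fan and the closed-graph property finish the proof.
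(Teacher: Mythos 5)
The paper offers no proof to compare against here: the statement is quoted as background, verbatim from Himmelberg [8], so your argument has to be judged on its own merits --- and it stands up. Your reduction is the classical one, essentially Himmelberg's original scheme: Schauder-type projections $\pi_V$ subordinate to finite subcovers $C\subset\bigcup_{i}(a_i+V)$, Ky Fan's theorem (equivalently Kakutani's, since $L_V$ is a finite-dimensional polytope) applied to $G_V=\mathrm{co}\,\pi_V(T(\cdot))$ on $L_V$, then a subnet limit closed off by the closed-graph property of an u.s.c.\ closed-valued map into a regular space. The delicate points are all handled: you rightly note that one cannot simply restrict to $\overline{\mathrm{co}}\,C$, which need not be compact without quasi-completeness; the device $z_V=\sum_j t_j y_j\in T(x_V)$, legitimate by convexity of the values, with $x_V-z_V\in V$ by convexity and symmetry of $V$, is exactly what makes the limit work; and the subnet property (indices eventually refining every fixed neighborhood) is what gives $x_V\to x^*$ alongside $z_V\to x^*$. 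Two cosmetic touch-ups: take the $V$'s open (or pass to interiors) so that $\{(a_i+V)\cap C\}$ is an open cover of the compact Hausdorff, hence normal, space $C$ --- that is what licenses the continuous partition of unity; and the upper semicontinuity of $x\mapsto\mathrm{co}\,S(x)$ for u.s.c.\ compact-valued $S$ deserves the one-line justification $\mathrm{co}(S(x_0)+V)\subset\mathrm{co}\,S(x_0)+V$ for convex $V$, with Carath\'eodory supplying compactness of the hulls inside $L_V$. Finally, the paper's own toolkit suggests a variant that avoids partitions of unity altogether: set $T_V(x)=\mathrm{cl}(T(x)+V)\cap L_V$, which is nonempty (since $C\subset\{a_1,\dots,a_n\}+V$ with $V$ symmetric), convex and compact-valued, and u.s.c.\ by Lemma 2 (Zheng); Ky Fan gives $x_V\in T_V(x_V)$, and compactness of $C$ together with Lemma 1 (Yuan) closes the limit --- this is precisely the approximation pattern the author deploys in the proofs of Theorems 13 and 15, so your route and the paper's methods are two renderings of the same idea.
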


\section{\textbf{FIXED POINT THEOREMS}}

We formulate the following fixed point theorem for weakly naturally
quasi-concave correspondences.\medskip

\begin{theorem}
(selection theorem) \textit{Let }$Y$\textit{\ be a non-empty subset of a
topological vector space }$E$\textit{\ and }$K$\textit{\ be a }$(n-1)$%
\textit{- dimensional simplex in a topological vector space }$F.$ \textit{\
Let }$T:K\rightarrow 2^{Y}$\textit{\ be a weakly naturally quasi-concave
correspondence. Then, }$T$\textit{\ has a continuous selection on }$K$%
\textit{.\medskip }
\end{theorem}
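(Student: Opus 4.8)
The plan is to exploit the defining data of the weakly naturally quasi-concave correspondence directly, applied to the vertices of the simplex. Let $K$ be the $(n-1)$-simplex with vertices $x_1, x_2, \ldots, x_n$, so that every point $x \in K$ has a unique representation $x = \sum_{i=1}^n \lambda_i x_i$ with $(\lambda_1, \ldots, \lambda_n) \in \Delta_{n-1}$; the barycentric coordinate map $x \mapsto (\lambda_1(x), \ldots, \lambda_n(x))$ is a homeomorphism from $K$ onto $\Delta_{n-1}$, and each coordinate $\lambda_i : K \to [0,1]$ is continuous. Applying the WNQ property to the finite set $\{x_1, \ldots, x_n\}$ itself furnishes points $y_i \in T(x_i)$ and continuous functions $g_i : \Delta_{n-1} \to [0,1]$ (with $g = (g_1, \ldots, g_n)$ mapping into $\Delta_{n-1}$, $g_i(1)=1$, $g_i(0)=0$) such that for every $(\lambda_1, \ldots, \lambda_n) \in \Delta_{n-1}$ one has
\[
\sum_{i=1}^n g_i(\lambda_i)\, y_i \in T\!\left(\sum_{i=1}^n \lambda_i x_i\right).
\]

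**Defining the selection.**

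I would then define $f : K \to Y$ by
\[
f(x) = \sum_{i=1}^n g_i\big(\lambda_i(x)\big)\, y_i .
\]
For membership, fix $x \in K$ and write $x = \sum_i \lambda_i(x) x_i$; the displayed WNQ relation gives exactly $f(x) = \sum_i g_i(\lambda_i(x)) y_i \in T\!\big(\sum_i \lambda_i(x) x_i\big) = T(x)$. Hence $f(x) \in T(x)$ for every $x \in K$, so $f$ is a selection of $T$. For continuity, $f$ is a finite linear combination (in the topological vector space $E$) of the fixed vectors $y_i$ with coefficients $g_i(\lambda_i(x))$; each coefficient is the composition of the continuous coordinate map $x \mapsto \lambda_i(x)$ with the continuous function $g_i$, and scalar multiplication and finite addition are continuous in $E$, so $f$ is continuous on $K$.

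**Anticipated obstacle.**

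The routine parts are the continuity of the barycentric coordinates and the continuity of vector operations, which hold because $K$ is a genuine simplex and $E$ is a topological vector space. The one point demanding care is verifying that $f$ lands in $Y$: the WNQ definition only asserts that the combination lies in $T(x) \subset Y$, and since $f(x) = \sum_i g_i(\lambda_i(x)) y_i$ is precisely that combination, membership in $Y$ follows from membership in $T(x)$ rather than needing a separate convexity argument on $Y$. Thus the chief subtlety is conceptual—recognizing that a \emph{single} choice of the data $(y_i, g_i)$ from the WNQ property, applied at the vertices, simultaneously works for \emph{all} $x \in K$ because the simplex has exactly $n$ vertices matching the finite set in the definition. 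No compactness, continuity of $T$, or selection machinery (Michael's theorem) is needed; the selection is built by hand.
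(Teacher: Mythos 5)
Your construction is exactly the paper's proof: apply the WNQ property once to the vertex set of $K$, define $f(x)=\sum_i g_i(\lambda_i(x))\,y_i$ via the barycentric coordinates, and observe that membership $f(x)\in T(x)$ is immediate from the defining relation. If anything, your continuity argument (composition of the continuous maps $\lambda_i$ and $g_i$ with the continuous vector-space operations of $E$) is slightly more careful than the paper's sequential argument, which is not strictly adequate in a general topological vector space, but the approach is essentially identical.
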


\begin{proof}
Let $a_{1},a_{2},...,a_{n}$ be the vertices of $K.$ Since $T$ is weakly
naturally quasi-concave, there exist $b_{i}\in T(a_{i})$, $(i=1,2,...,n)$
and $g=(g_{1},g_{2},...,g_{n}):\Delta _{n-1}\rightarrow \Delta _{n-1}$ a
function with $g_{i}$ continuous, $g_{i}(1)=1$ and $g_{i}(0)=0$ for each%
\textit{\ }$i=1,2,...n$, such that for every $(\lambda _{1},\lambda
_{2},...,\lambda _{n})\in \Delta _{n-1}$, there exists $y\in T(\overset{n}{%
\underset{i=1}{\tsum }}\lambda _{i}a_{i})$ with $y=\overset{n}{\underset{i=1}%
{\tsum }}g_{i}(\lambda _{i})y_{i}.$

Since $K$ is a $(n-1)$-dimensional simplex with the vertices $%
a_{1},...,a_{n},$ there exists unique continuous functions $\lambda
_{i}:K\rightarrow \mathbb{R},$ $i=1,2,...,n$ such that for each $x\in K,$ we
have $(\lambda _{1}(x),\lambda _{2}(x),...,\lambda _{n}(x))\in \Delta _{n-1}$
and $x=\overset{n}{\underset{i=1}{\tsum }}\lambda _{i}(x)a_{i}.$

Let's define $f:K\rightarrow 2^{Y}$ by

$f(a_{i})=b_{i}$ $(i=1,...,n)$ and

$f(\overset{n}{\underset{i=1}{\tsum }}\lambda _{i}a_{i})=\overset{n}{%
\underset{i=1}{\tsum }}g_{i}(\lambda _{i})b_{i}\in T(x).$

We show that $f$ is continuous.

Let $(x_{m})_{m\in N}$ be a sequence which converges to $x_{0}\in K,$ where $%
x_{m}=\overset{n}{\underset{i=1}{\tsum }}\lambda _{i}(x_{m})a_{i}$ and $%
x_{0}=$ $\overset{n}{\underset{i=1}{\tsum }}\lambda _{i}(x_{0})a_{i}.$ By
the continuity of $\lambda _{i},$ it follows that for each $i=1,2,...,n$, $%
\lambda _{i}(x_{m})\rightarrow \lambda _{i}(x_{0})$ as $m\rightarrow \infty
. $ Since $\ g_{1},...,g_{n}$ are continuous, we have $g_{i}(\lambda
_{i}(x_{m}))\rightarrow g_{i}(\lambda _{i}(x_{0}))$ as $m\rightarrow \infty
. $ Hence $f(x_{m})\rightarrow f(x_{0})$ as $m\rightarrow \infty ,$ i.e. $f$
is continuous.\medskip
\end{proof}

\begin{theorem}
\textit{Let }$Y$\textit{\ be a non-empty subset of a topological vector
space }$E$\textit{\ and }$K$\textit{\ be a }$(n-1)$\textit{- dimensional
simplex in }$E$\textit{. Let }$T:K\rightarrow 2^{Y}$\textit{\ be an weakly
naturally quasi-concave correspondence and }$s:Y\rightarrow K$\textit{\ be a
continuous function. Then, there exists }$x^{\ast }\in K$\textit{\ such that
\ }$x^{\ast }\in s\circ T(x^{\ast })$\textit{.\medskip }
\end{theorem}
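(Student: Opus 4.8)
The plan is to combine the selection theorem (Theorem 3.1) with a finite-dimensional fixed point result. Since $T:K\rightarrow 2^{Y}$ is weakly naturally quasi-concave, Theorem 3.1 guarantees a continuous selection $f:K\rightarrow Y$ with $f(x)\in T(x)$ for every $x\in K$. Composing with the given continuous function $s:Y\rightarrow K$, I obtain a continuous map $s\circ f:K\rightarrow K$. Because $K$ is an $(n-1)$-dimensional simplex, it is compact and convex in the finite-dimensional affine subspace it spans, so Brouwer's fixed point theorem applies to the continuous self-map $s\circ f$. Hence there exists $x^{\ast}\in K$ with $s(f(x^{\ast}))=x^{\ast}$.

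The remaining step is to rewrite this fixed point of $s\circ f$ as the desired membership $x^{\ast}\in s\circ T(x^{\ast})$. Here I interpret $s\circ T$ as the correspondence $x\mapsto s(T(x))=\{s(y):y\in T(x)\}$. Since $f$ is a selection of $T$, we have $f(x^{\ast})\in T(x^{\ast})$, and therefore $s(f(x^{\ast}))\in s(T(x^{\ast}))$. Combining with $x^{\ast}=s(f(x^{\ast}))$ yields $x^{\ast}\in s(T(x^{\ast}))=s\circ T(x^{\ast})$, which is exactly the conclusion.

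I would carry out the steps in this order: first invoke Theorem 3.1 to produce the continuous selection $f$; second form the composition $s\circ f$ and verify it is a continuous self-map of the compact convex set $K$; third apply Brouwer to get the fixed point $x^{\ast}$; fourth pass from the fixed point equation to the set membership using that $f$ selects from $T$. The main subtlety, and the only place where care is needed, is the justification that Brouwer's theorem is available: although $F$ may be an arbitrary topological vector space, the simplex $K$ lives in the finite-dimensional subspace spanned by its vertices, on which the topology coincides with the usual Euclidean topology, so $K$ is homeomorphic to the standard simplex $\Delta_{n-1}$ and Brouwer applies directly. Alternatively one could phrase this via Theorem 2.2 (Ky Fan), viewing $s\circ T$ as an upper semicontinuous correspondence with convex values, but the selection-plus-Brouwer route is cleaner since the selection already reduces the problem to a single-valued map. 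I do not expect any genuine obstacle; the content is entirely in the selection theorem, which has already been established.
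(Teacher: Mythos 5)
Your proposal is correct and follows essentially the same route as the paper's own proof: invoke the selection theorem to obtain a continuous selection $f$ of $T$, apply Brouwer's fixed point theorem to the continuous self-map $s\circ f:K\rightarrow K$, and pass from $x^{\ast}=s(f(x^{\ast}))$ to $x^{\ast}\in s\circ T(x^{\ast})$ via $f(x^{\ast})\in T(x^{\ast})$. Your added justification that $K$, being a finite-dimensional simplex in a possibly infinite-dimensional topological vector space, is homeomorphic to the standard simplex $\Delta_{n-1}$ (so Brouwer applies) is a welcome clarification of a step the paper leaves implicit, but it does not constitute a different approach.
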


\begin{proof}
By Theorem 6, $T$ has a continuous selection theorem on $K.$ Since $%
s:Y\rightarrow K$ is continuous, we obtain that $s\circ f:K\rightarrow K$ is
continuous. By Brouwer's fixed point theorem, there exists a point $x^{\ast
}\in K$ such that $x^{\ast }=s\circ f(x^{\ast })$ and then, $x^{\ast }\in
s\circ T(x^{\ast }).$\medskip
\end{proof}

\begin{theorem}
(selection theorem). \textit{Let }$Y$\textit{\ be a non-empty subset of a
topological vector space }$E$\textit{\ and }$K$\textit{\ be a }$(n-1)$%
\textit{- dimensional simplex in a topological vector space }$F.$ \textit{\
Let }$T:K\rightarrow 2^{Y}$\textit{\ be a weakly *-concave correspondence.
Then, }$T$\textit{\ has a continuous selection on }$K$\textit{.\medskip }
\end{theorem}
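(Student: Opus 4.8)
The plan is to follow the same strategy used in Theorem 6, adapting it to the weakly *-concave setting. First I would let $a_{1},a_{2},\ldots,a_{n}$ denote the vertices of the simplex $K$. Since $T$ is weakly *-concave, applied to this finite set of vertices there exist points $b_{i}\in T(a_{i})$, $(i=1,2,\ldots,n)$, such that for every $(\lambda_{1},\lambda_{2},\ldots,\lambda_{n})\in\Delta_{n-1}$ we have $\overset{n}{\underset{i=1}{\tsum}}\lambda_{i}b_{i}\in T(x)$ for each $x\in X$; in particular this inclusion holds for $x=\overset{n}{\underset{i=1}{\tsum}}\lambda_{i}a_{i}\in K$.

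Next I would recall, exactly as in the proof of Theorem 6, that because $K$ is an $(n-1)$-dimensional simplex with vertices $a_{1},\ldots,a_{n}$, there exist unique continuous barycentric coordinate functions $\lambda_{i}:K\rightarrow\mathbb{R}$, $i=1,2,\ldots,n$, with $(\lambda_{1}(x),\ldots,\lambda_{n}(x))\in\Delta_{n-1}$ and $x=\overset{n}{\underset{i=1}{\tsum}}\lambda_{i}(x)a_{i}$ for each $x\in K$. I would then define the candidate selection $f:K\rightarrow Y$ by $f(x)=\overset{n}{\underset{i=1}{\tsum}}\lambda_{i}(x)b_{i}$. The weakly *-concavity guarantees $f(x)\in T(x)$ for every $x\in K$, so $f$ is genuinely a selection of $T$.

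Finally I would verify continuity of $f$. Taking a sequence $(x_{m})_{m\in N}$ in $K$ converging to $x_{0}\in K$, the continuity of each barycentric coordinate $\lambda_{i}$ gives $\lambda_{i}(x_{m})\rightarrow\lambda_{i}(x_{0})$ as $m\rightarrow\infty$, whence $f(x_{m})=\overset{n}{\underset{i=1}{\tsum}}\lambda_{i}(x_{m})b_{i}\rightarrow\overset{n}{\underset{i=1}{\tsum}}\lambda_{i}(x_{0})b_{i}=f(x_{0})$, using continuity of the vector-space operations. Hence $f$ is a continuous selection of $T$ on $K$.

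The argument here is actually slightly cleaner than in Theorem 6, since the weakly *-concave definition supplies the linear combination $\sum\lambda_{i}b_{i}$ directly without the intervening functions $g_{i}$, so there is no composition with the $g_{i}$ to track. The only point requiring mild care is that the inclusion in the definition of weakly *-concave is asserted for \emph{all} $x\in X$ with a single choice of the $b_{i}$; I would emphasize that this is precisely what lets the fixed linear combination of the $b_{i}$ serve simultaneously at every $x\in K$, which is what makes $f$ well-defined as a single continuous map rather than merely a pointwise selection. I do not anticipate a serious obstacle, as the topological input (continuity of barycentric coordinates and of addition and scalar multiplication in a topological vector space) is entirely standard.
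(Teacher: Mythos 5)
Your proposal is correct and follows essentially the same route as the paper's own proof: apply weak *-concavity at the vertices to obtain the points $b_{i}$, use the continuous barycentric coordinates $\lambda_{i}$ of the simplex, and define $f(x)=\sum_{i=1}^{n}\lambda_{i}(x)b_{i}$, whose continuity follows from that of the $\lambda_{i}$ and of the vector-space operations. Your closing observation --- that the definition supplies the $b_{i}$ uniformly for all $x$, making $f$ well-defined without the auxiliary functions $g_{i}$ of Theorem 6 --- is accurate and matches why the paper's argument here is a simplification of its proof of Theorem 6.
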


\begin{proof}
Let $a_{1},a_{2},...,a_{n}$ be the vertices of $K.$ Since $T$ is weakly
*-concave, there exist $b_{i}\in T(a_{i})$, $(i=1,2,...,n)$ such that for
every $(\lambda _{_{1}},\lambda _{_{2}},...,\lambda _{n})\in \Delta _{n-1}$, 
$\overset{n}{\underset{i=1}{\tsum }}\lambda _{i}b_{i}\subset T(x)$, for each 
$x\in X.$

Since $K$ is a $(n-1)$-dimensional simplex with the vertices $%
a_{1},...,a_{n},$ there exists unique continuous functions $\lambda
_{i}:K\rightarrow \mathbb{R},$ $i=1,2,...,n$ such that for each $x\in K,$ we
have $(\lambda _{1}(x),\lambda _{2}(x),...,\lambda _{n}(x))\in \Delta _{n-1}$
and $x=\overset{n}{\underset{i=1}{\tsum }}\lambda _{i}(x)a_{i}.$

Let's define $f:K\rightarrow 2^{Y}$ by

$f(a_{i})=b_{i}$ $(i=1,...,n)$ and

$f(\overset{n}{\underset{i=1}{\tsum }}\lambda _{i}a_{i})=\overset{n}{%
\underset{i=1}{\tsum }}\lambda _{i}b_{i}\in T(x).$

We show that $f$ is continuous.

Let $(x_{m})_{m\in N}$ be a sequence which converges to $x_{0}\in K$ where $%
x_{m}=\overset{n}{\underset{i=1}{\tsum }}\lambda _{i}(x_{m})a_{i}$ and $%
x_{0}=$ $\overset{n}{\underset{i=1}{\tsum }}\lambda _{i}(x_{0})a_{i}.$ By
the continuity of $\lambda _{i},$ it follows that for each $i=1,2,...,n$, $%
\lambda _{i}(x_{m})\rightarrow \lambda _{i}(x_{0})$ as $m\rightarrow \infty
. $ Hence we must have $f(x_{m})\rightarrow f(x_{0})$ as $m\rightarrow
\infty , $ i.e. $f$ is continuous.\medskip
\end{proof}

\begin{theorem}
\textit{Let }$Y$\textit{\ be a non-empty subset of a topological vector
space }$E$\textit{\ and }$K$\textit{\ be a }$(n-1)$\textit{- dimensional
simplex in }$E.$\textit{\ Let }$T:K\rightarrow 2^{Y}$\textit{\ be a weakly
*-concave correspondence\ and }$s:Y\rightarrow K$\textit{\ be a continuous
function. Then, there exists }$x^{\ast }\in K$\textit{\ such that \ }$%
x^{\ast }\in s\circ T(x^{\ast })$\textit{.\medskip }
\end{theorem}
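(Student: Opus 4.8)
The plan is to follow verbatim the pattern established for the weakly naturally quasi-concave case in Theorem 7, simply replacing the selection theorem for weakly naturally quasi-concave correspondences by its weakly *-concave counterpart, namely Theorem 8. The overall idea is to reduce the set-valued fixed point assertion to an ordinary Brouwer fixed point problem: first replace the correspondence $T$ by a continuous single-valued selection, then compose with the continuous map $s$ to obtain a continuous self-map of the simplex $K$, and finally transfer the resulting single-valued fixed point back to the desired membership relation.

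First I would invoke Theorem 8: since $T:K\rightarrow 2^{Y}$ is weakly *-concave and $K$ is an $(n-1)$-dimensional simplex, $T$ admits a continuous selection $f:K\rightarrow Y$, so that $f(x)\in T(x)$ for every $x\in K$ and $f$ is continuous. Next, because $s:Y\rightarrow K$ is continuous by hypothesis and $f$ takes values in $Y$, the composition $s\circ f:K\rightarrow K$ is well defined and continuous, mapping the simplex into itself. Since an $(n-1)$-dimensional simplex is a non-empty compact convex subset of a finite-dimensional space, Brouwer's fixed point theorem applies to $s\circ f$ and yields a point $x^{\ast}\in K$ with $x^{\ast}=s(f(x^{\ast}))$. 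Finally, using the selection property $f(x^{\ast})\in T(x^{\ast})$, I would apply $s$ to conclude $x^{\ast}=s(f(x^{\ast}))\in s(T(x^{\ast}))=(s\circ T)(x^{\ast})$, which is exactly the desired statement $x^{\ast}\in s\circ T(x^{\ast})$.

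I do not expect any serious obstacle here, because the entire analytic content of the argument has already been absorbed into Theorem 8, which manufactures the continuous single-valued map $f$ that makes Brouwer's theorem applicable. The only steps requiring even minor care are: ensuring that the codomain of $f$ is $Y$ so that $s\circ f$ is legitimately defined (guaranteed by the selection theorem), and the passage from the single-valued equation $x^{\ast}=s(f(x^{\ast}))$ back to the set-valued membership $x^{\ast}\in s\circ T(x^{\ast})$, which is immediate from $f(x^{\ast})\in T(x^{\ast})$. Thus the proof is a direct transcription of the Theorem 7 argument with Theorem 8 in place of Theorem 6.
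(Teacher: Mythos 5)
Your proposal is correct and is essentially identical to the paper's own proof of this theorem: the paper likewise applies Theorem 8 to obtain a continuous selection $f$ of $T$, notes that $s\circ f:K\rightarrow K$ is continuous, invokes Brouwer's fixed point theorem to get $x^{\ast}=s\circ f(x^{\ast})$, and concludes $x^{\ast}\in s\circ T(x^{\ast})$. Your added remark justifying Brouwer's applicability (the simplex being a compact convex set spanning a finite-dimensional subspace, on which the vector topology is the Euclidean one) is a small but welcome precision that the paper leaves implicit.
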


\begin{proof}
By Theorem 8, $T$ has a continuous selection theorem on $K.$ Since $%
s:Y\rightarrow K$ is continuous, we obtain that $s\circ f:K\rightarrow K$ is
continuous. By Brouwer's fixed point theorem, there exists a point $x^{\ast
}\in K$ such that $x^{\ast }=s\circ f(x^{\ast })$ and then, $x^{\ast }\in
s\circ T(x^{\ast }).$\medskip
\end{proof}

\section{\textbf{EQUILIBRIUM THEOREMS}}

First, we present the model of an abstract economy and the definition of an
equilibrium.

Let $I$ be a non-empty set (the set of agents). For each $i\in I$, let $%
X_{i} $ be a non-empty topological vector space representing the set of
actions and define $X:=\underset{i\in I}{\prod }X_{i}$; let $A_{i}$, $%
B_{i}:X\rightarrow 2^{X_{i}}$ be the constraint correspondences and $P_{i}$
the preference correspondence.

\begin{definition}
The family $\Gamma =(X_{i},A_{i},P_{i},B_{i})_{i\in I}$ is said to be an 
\textit{abstract economy.}
\end{definition}

\begin{definition}
An \textit{equilibrium} for $\Gamma $ is defined as a point $\overline{x}\in
X$ such that for each $i\in I$, $\overline{x}_{i}\in \overline{B}_{i}(%
\overline{x})$ and $A_{i}(\overline{x},)\cap P_{i}(\overline{x})=\emptyset $%
.\medskip 
\end{definition}

\begin{remark}
When for each $i\in I$, $A_{i}(x)=B_{i}(x)$ for all $x\in X,$ this abstract
economy model coincides with the classical one introduced by Borglin and
Keiding in [2]. If in addition, $\overline{B}_{i}(\overline{x})=$cl$%
_{X_{i}}B_{i}(\overline{x})$ for each $x\in X,$ which is the case if $B_{i}$
has a closed graph in $X\times X_{i}$, the definition of an equilibrium
coincides with the one used by Yannelis and Prabhakar [21].\medskip 
\end{remark}

To prove the following theorems we use the selection theorem mentioned in
Section 3. We show the existence of equilibrium for an abstract economy
without assuming the continuity of the constraint and the preference
correspondences \textit{\ }$A_{i}$ and $P_{i}$.

First, we prove a new equilibrium existence theorem for a noncompact
abstract economy with constraint and preference correspondences $A_{i}$ and $%
P_{i},$ which have the property that their intersection $A_{i}\cap P_{i}$
contains a WNQ selector on the domain $W_{i}$ of $A_{i}\cap P_{i}$ and $%
W_{i} $ must be a simplex$.$ To find the equilibrium point, we use Wu's
fixed point theorem [19].

Since the constraint correspondence $B_{i}$ is lower semicontinuous for each 
$i\in I,$ the next theorem can be compared with Theorem 5 of Wu [19]. The
proofs of these results are based on similar methods.\medskip

\begin{theorem}
\textit{Let }$\Gamma =(X_{i},A_{i},P_{i},B_{i})_{i\in I}$\textit{\ \ be an
abstract economy, where }$I$\textit{\ is a (possibly uncountable) set of
agents such that for each }$i\in I:$
\end{theorem}

(1)\textit{\ }$X_{i}$\textit{\ is a non-empty convex set in a locally convex
space }$E_{i}$ \textit{and there exists a compact subset }$D_{i}$\textit{\
of }$X_{i}$\textit{\ containing all the values of the correspondences }$%
A_{i},P_{i}$\textit{\ and }$B_{i}$\textit{\ such that }$D=\tprod\limits_{i%
\in I}D_{i}$\textit{\ is metrizable;}

(2)\textit{\ }cl$B_{i}$\textit{\ is lower semicontinuous, has non-empty
convex values and for each }$x\in X$,\textit{\ }$A_{i}(x)\subset B_{i}(x)$%
\textit{;}

(3)\textit{\ } $W_{i}\mathit{\ }=\left\{ x\in X\text{ / }\left( A_{i}\cap
P_{i}\right) (x)\neq \emptyset \right\} $\textit{\ is} \textit{a }$(n_{i}-1)$%
\textit{-dimensional simplex in }$X$\textit{\ such that }$W_{i}\subset $co$D$%
\textit{;}

(4)\textit{\ there exists a weakly naturally quasi-concave correspondence }$%
S_{i}:W_{i}\rightarrow 2^{D_{i}}$\textit{\ such that}$\mathit{\ }$ $%
S_{i}(x)\subset \left( A_{i}\cap P_{i}\right) (x)$ for each $x\in W_{i}$%
\textit{;}

(5)\textit{\ for each }$x\in W_{i},$\textit{\ }$x_{i}\notin (A_{i}\cap
P_{i})(x)$\textit{.}

\textit{Then, there exists an equilibrium point }$\overline{x}\in D$ \textit{%
\ for }$\Gamma $\textit{,}$\ i.e.$\textit{, for each }$i\in I$\textit{, }$%
\overline{x}_{i}\in $cl$B_{i}(\overline{x})$\textit{\ and }$A_{i}(\overline{x%
})\cap P_{i}(\overline{x})=\emptyset $\textit{.\medskip }

\begin{proof}
Let be $i\in I.$ From the assumption (4) and the selection theorem (Theorem
6), it follows that there exists a continuous function $f_{i}:W_{i}%
\rightarrow D_{i}$ such that for each $x\in W_{i}$, $f_{i}(x)\in
S_{i}(x)\subset A_{i}(x)\cap P_{i}(x)\subset B_{i}(x).$

Define the correspondence $T_{i}:X\rightarrow 2^{D_{i}}$, by $%
T_{i}(x):=\left\{ 
\begin{array}{c}
\{f_{i}(x)\}\text{, if }x\in W_{i}\text{, } \\ 
\text{cl}B_{i}(x)\text{, if }x\notin W_{i}\text{.}%
\end{array}%
\right. $

$T_{i}$ is lower semicontinuous on $X$.

Let $V$ be a closed subset of $\ X_{i}$, then

$U:=\{x\in X$ $\mid $ $T_{i}(x)\subset V\}$\ \ \ =$\{x\in W_{i}$ $\mid $ $%
T_{i}(x)\subset V\}\cup \{x\in X\setminus W_{i}$ $\mid $ $T_{i}(x)\subset
V\} $

\ \ \ \ \ =$\left\{ x\in W_{i}\text{ }\mid \text{ }f_{i}(x)\in V\right\}
\cup \left\{ x\in X\mid \text{ cl}B_{i}(x)\subset V\right\} $

\ \ \ \ \ =$(f_{i}^{-1}(V)\cap W_{i})\cup \left\{ x\in X\mid \text{ cl}%
B_{i}(x)\subset V\right\} .$

$U$ is a closed set, because $W_{i}$ is closed, $f_{i}$ is a continuous
function on int$_{X}K_{i}$ and the set $\left\{ x\in X\mid \text{ cl}%
B_{i}(x)\subset V\right\} $ is closed since cl$B_{i}$ is l.s.c. Let $%
D=\tprod\limits_{i\in I}D_{i}.$ Then by Tychonoff's Theorem, $D$ is compact
in the convex set $X$.

By Theorem 3 (Wu's fixed-point theorem), applied for the correspondences $%
S_{i}=T_{i}$ and $T_{i}:X\rightarrow 2^{D_{i}},$ there exists $\overline{x}%
\in D$ such that for each $i\in I$, $\overline{x}_{i}\in T_{i}(\overline{x})$%
. If $\overline{x}\in W_{i}$ for some $i\in I$, then $\overline{x}_{i}=f_{i}(%
\overline{x})$, which is a contradiction.

Therefore, $\overline{x}\notin W_{i}$, and hence, $(A_{i}\cap P_{i})(%
\overline{x})=\emptyset $. Also, for each $i\in I$, we have $\overline{x}%
_{i}\in T_{i}(\overline{x})$, and then, $\overline{x}_{i}\in $cl$B_{i}(%
\overline{x}).\medskip $
\end{proof}

\begin{remark}
In this theorem, the correspondences $A_{i}\cap P_{i},$ $i\in I,$ may not
verify continuity assumptions and may not have convex or compact
values.\medskip
\end{remark}

\begin{remark}
In assumption (3), $W_{i}$ must be a proper subset of $X$. In fact, if $%
W_{i}=X_{i}$, then, by applying Himmelberg's fixed point theorem ([8]) to $%
\underset{i\in I}{\prod }f_{i}(x),$ where $f_{i}$ is a continuous selection
of $S_{i}\subset A_{i}\cap P_{i}$, we can get a fixed point $\overline{x}\in 
\underset{i\in I}{\prod }(A_{i}\cap P_{i})(\overline{x})$, which contradicts
assumption (5).\medskip
\end{remark}

Since a correspondence $T:X\rightarrow 2^{Y}$ having the property that $\cap
\{T(x):x\in X\}$ is nonempty and convex, is a WNQ correspondence, we obtain
the following corollary.\medskip

\begin{corollary}
\textit{Let }$\Gamma =(X_{i},A_{i},P_{i},B_{i})_{i\in I}$\textit{\ \ be an
abstract economy, where }$I$\textit{\ is a (possibly uncountable) set of
agents such that for each }$i\in I:$
\end{corollary}

(1)\textit{\ }$X_{i}$\textit{\ is a non-empty convex set in a locally convex
space }$E_{i}$ \textit{and there exists a compact subset }$D_{i}$\textit{\
of }$X_{i}$\textit{\ containing all the values of the correspondences }$%
A_{i},P_{i}$\textit{\ and }$B_{i}$\textit{\ such that }$D=\tprod\limits_{i%
\in I}D_{i}$\textit{\ is metrizable;}

(2)\textit{\ }cl$B_{i}$\textit{\ is lower semicontinuous, has non-empty
convex values and for each }$x\in X$,\textit{\ }$A_{i}(x)\subset B_{i}(x)$%
\textit{;}

(3)\textit{\ } $W_{i}\mathit{\ }=\left\{ x\in X\text{ / }\left( A_{i}\cap
P_{i}\right) (x)\neq \emptyset \right\} $\textit{\ is a }$(n_{i}-1)$\textit{%
-dimensional simplex in }$X$\textit{\ such that }$W_{i}\subset $co$D$\textit{%
;}

(4)\textit{\ there exists a correspondence }$S_{i}:W_{i}\rightarrow
2^{D_{i}} $\textit{\ such that}$\mathit{\ }S_{i}$ \textit{has the property
that }$\cap \{T(x):x\in X\}$ \textit{is nonempty and convex, and} $%
S_{i}(x)\subset \left( A_{i}\cap P_{i}\right) (x)$ \textit{for each }$x\in
W_{i}$\textit{;}

(5)\textit{\ for each }$x\in W_{i},$\textit{\ }$x_{i}\notin (A_{i}\cap
P_{i})(x)$\textit{.}

\textit{Then there exists an equilibrium point }$\overline{x}\in D$ \textit{%
\ for }$\Gamma $\textit{,}$\ i.e.$\textit{, for each }$i\in I$\textit{, }$%
\overline{x}_{i}\in $cl$B_{i}(\overline{x})$\textit{\ and }$A_{i}(\overline{x%
})\cap P_{i}(\overline{x})=\emptyset $\textit{.\medskip }

A correspondence $T:X\rightarrow 2^{Y}$ with convex graph is a WNQ
correspondence, and then we have:

\begin{corollary}
\textit{Let }$\Gamma =(X_{i},A_{i},P_{i},B_{i})_{i\in I}$\textit{\ \ be an
abstract economy, where }$I$\textit{\ is a (possibly uncountable) set of
agents such that for each }$i\in I:$
\end{corollary}

(1)\textit{\ }$X_{i}$\textit{\ is a non-empty compact convex set in a
locally convex space }$E_{i}$;

(2)\textit{\ }cl$B_{i}$\textit{\ is lower semicontinuous, has non-empty
convex values and for each }$x\in X$,\textit{\ }$A_{i}(x)\subset B_{i}(x)$%
\textit{;}

(3)\textit{\ } $W_{i}\mathit{\ }=\left\{ x\in X\text{ / }\left( A_{i}\cap
P_{i}\right) (x)\neq \emptyset \right\} $\textit{\ is} a $(n_{i}-1)$-\textit{%
dimensional simplex in }$X$;

(4)\textit{there exists a correspondence }$S_{i}:W_{i}\rightarrow 2^{X_{i}}$%
\textit{\ with convex graph such that}$\mathit{\ }S_{i}(x)$

\noindent $\subset \left( A_{i}\cap P_{i}\right) (x)$ \textit{for each }$%
x\in W_{i}$\textit{;}

(5)\textit{\ for each }$x\in W_{i},$\textit{\ }$x_{i}\notin (A_{i}\cap
P_{i})(x)$\textit{.}

\textit{Then there exists an equilibrium point }$\overline{x}\in X$ \textit{%
\ for }$\Gamma $\textit{,}$\ i.e.$\textit{, for each }$i\in I$\textit{, }$%
\overline{x}_{i}\in $cl$B_{i}(\overline{x})$\textit{\ and }$A_{i}(\overline{x%
})\cap P_{i}(\overline{x})=\emptyset $\textit{.\medskip }

For Theorem 13, we use an approximation method, in the meaning that we
obtain, for each $i\in I,$ a continuous selection $f_{i}^{V_{i}}$ of $%
(A_{i}+V_{i})\cap P_{i},$ where $V_{i}$ is a convex neighborhood of $0$ in $%
X_{i}$. For every $V=\tprod\limits_{i\in I}V_{i}$, we obtain an equilibrium
point for the associated approximate abstract economy $\Gamma
_{V}=(X_{i},A_{i},P_{i},B_{V_{i}})_{i\in I}$\textit{, }i.e.\textit{,} a point%
\textit{\ }$\overline{x}\in X$ such that $A_{i}(\overline{x})\cap P_{i}(%
\overline{x})=\emptyset $ and $\overline{x}_{i}\in B_{V_{i}}(\overline{x}),$
where the correspondence $B_{V_{i}}:X\rightarrow 2^{X_{i}}$ is defined by $%
B_{V_{i}}(x)=$cl$(B_{i}(x)+V_{i})\cap X_{i}$ for each $x\in X$ and for each $%
i\in I.$ Finally, we use Lemma 1 to get an equilibrium point for $\Gamma $
in $X$. The compactness assumption for $X_{i}$ is essential in the proof.

Examples of results which use an approximation method are Theorem 3.1 pag 37
or Theorem 1.2, pag. 41 in [23]. This method is usually used in relation
with abstract economies which have lower semicontinuous constraint
correspondences.\medskip

\begin{theorem}
\textit{Let }$\Gamma =(X_{i},A_{i},P_{i},B_{i})_{i\in I}$\textit{\ \ be an
abstract economy, where }$I$\textit{\ is a (possibly uncountable) set of
agents such that for each }$i\in I:$
\end{theorem}

(1)\textit{\ }$X_{i}$\textit{\ is a non-empty compact convex set in a
locally convex space }$E_{i}$\textit{;}

(2)\textit{\ }cl$B_{i}$\ \textit{is upper semicontinuous}, \textit{has} 
\textit{non-empty convex values and for each }$x\in X$,\textit{\ }$%
A_{i}(x)\subset B_{i}(x)$\textit{;}

(3) \textit{the set }$W_{i}:=\left\{ x\in X\text{/}\left( A_{i}\cap
P_{i}\right) (x)\neq \emptyset \right\} $\textit{\ is non-empty, open and }$%
K_{i}=$cl$W_{i}$ \textit{\ is} \textit{a} $(n_{i}-1)$\textit{-dimensional
simplex in }$\mathit{X}$\textit{;}

(4)\textit{\ For each convex neighbourhood }$V$\textit{\ of} $0$\textit{\ in}
$X_{i}$, $(A_{i}+V)\cap P_{i}:K_{i}\rightarrow 2^{X_{i}}$ \textit{is a
weakly naturally quasi-concave correspondence}$;$

(5)\textit{\ for each }$x\in K_{i},$\textit{\ }$x_{i}\notin P_{i}(x)$\textit{%
.}

\textit{Then there exists an equilibrium point }$\overline{x}\in X$ \textit{%
\ for }$\Gamma $\textit{,}$\ i.e.$\textit{, for each }$i\in I$\textit{, }$%
\overline{x}_{i}\in \overline{B}_{i}(\overline{x})$\textit{\ and }$A_{i}(%
\overline{x})\cap P_{i}(\overline{x})=\emptyset $\textit{.\medskip }

\begin{proof}
For each\textit{\ }$i\in I$, let \ss $_{i}$ denote the family of all open
convex neighborhoods of zero in $E_{i}.$ Let $V=(V_{i})_{i\in I}\in
\tprod\limits_{i\in I}$\ss $_{i}.$ Since $(A_{i}+V_{i})\cap P_{i}$ is a
weakly concave like correspondence on $K_{i},$ then, from the selection
theorem (Theorem 6), there exists a continuous function $f_{i}^{V_{i}}:K_{i}%
\rightarrow X_{i}$ such that for each $x\in K_{i}$,

$f_{i}^{V_{i}}(x)\in (A_{i}(x)+V_{i})\cap P_{i}(x)\subset
(A_{i}(x)+V_{i})\cap X_{i}.$

It follows that $f_{i}^{V_{i}}(x)\in $cl$(B_{i}(x)+V_{i})$ for $x\in K_{i}.$
Since $X_{i}$ is compact, we have that cl$B_{i}(x)$ is compact for every $%
x\in X$ and cl$(B_{i}(x)+V_{i})=$cl$(B_{i}(x))+$cl$V_{i}$ for every $%
V_{i}\subset E_{i}.$

Define the correspondence $T_{i}^{V_{i}}:X\rightarrow 2^{X_{i}}$, by

$T_{i}^{V_{i}}(x):=\left\{ 
\begin{array}{c}
\{f_{i}^{V_{i}}(x)\}\text{, \ \ \ \ \ \ \ \ \ \ \ \ \ \ \ \ \ \ \ \ \ \ \ \
\ \ \ \ \ \ \ \ \ if }x\in \text{int}_{X}K=W_{i}\text{, } \\ 
\text{cl}(B_{i}(x)+V_{i})\cap X_{i}\text{, \ \ \ \ \ \ \ \ \ \ \ \ \ \ \ \ \
\ if }x\in X\smallsetminus \text{int}_{X}K_{i}\text{;}%
\end{array}%
\right. $

The correspondence $B_{V_{i}}:X\rightarrow 2^{X_{i}}$, defined by $%
B_{V_{i}}(x):=$cl$(B_{i}(x)+V_{i})\cap X_{i}$ is u.s.c. by Lemma 2. Then
following the same line as in Theorem 10, we can prove that $T_{i}^{V_{i}}$
is upper semicontinuous on $X$ and has closed convex values.

Define $T^{V}:X\rightarrow 2^{X}$ by $T^{V}(x):=\underset{i\in I}{\prod }%
T_{i}^{V_{i}}(x)$ for each $x\in X$.

$T^{V}$ is an upper semicontinuous correspondence and it also has non-empty
convex closed values.

Since $X$ is a compact convex set, by Fan's fixed-point theorem [7], there
exists $\overline{x}_{V}\in X$ such that $\overline{x}_{V}\in T^{V}(%
\overline{x}_{V})$, i.e., for each $i\in I$, $(\overline{x}_{V})_{i}\in
T_{i}^{V_{i}}(\overline{x}_{V})$.

We state that $\overline{x}_{V}\in X\setminus \underset{i\in I}{\tbigcup }$%
int$_{X}K_{i}.$

If $\overline{x}_{V}\in $int$_{X}K_{i},$ $(\overline{x}_{V})_{i}\in
T_{i}^{V_{i}}(\overline{x}_{V})=f_{i}(\overline{x}_{V})\in ((A_{i}(\overline{%
x}_{V})+V_{i})\cap P_{i})(\overline{x}_{V})\subset P_{i}(\overline{x}_{V})$,
which contradicts assumption (5).

Hence $(\overline{x}_{V})_{i}\in $cl$(B_{i}(\overline{x}_{V})+V_{i})\cap
X_{i}$ and $(A_{i}\cap P_{i})(\overline{x}_{V})=\emptyset ,$ i.e. $\overline{%
x}_{V}\in Q_{V}$ where

$Q_{V}=\cap _{i\in I}\{x\in X:$ $x_{i}\in $cl$(B_{i}(x)+V_{i})\cap X_{i}$
and $(A_{i}\cap P_{i})(x)=\emptyset \}.$

Since $W_{i}$ is open, $Q_{V}$ is the intersection of non-empty closed sets,
then it is non-empty, closed in $X$.

We prove that the family $\{Q_{V}:V\in \underset{i\in I}{\tprod }\text{\ss }%
_{i}\}$ has the finite intersection property.

Let $\{V^{(1)},V^{(2)},...,V^{(n)}\}$ be any finite set of $\underset{i\in I}%
{\tprod }\text{\ss }_{i}$ and let $V^{(k)}=(V_{i}^{(k)})_{i\in I}$, $%
k=1,...,n.$ For each $i\in I$, let $V_{i}=\underset{k=1}{\overset{n}{\cap }}%
V_{i}^{(k)}$, then $V_{i}\in \text{\ss }_{i};$ thus $V=(V_{i})_{i\in I}\in 
\underset{i\in I}{\tprod }\text{\ss }_{i}$. Clearly $Q_{V}\subset \underset{%
k=1}{\overset{n}{\cap }}Q_{V^{(k)}}$ so that $\underset{k=1}{\overset{n}{%
\cap }}Q_{V^{(k)}}\neq \emptyset .$

Since X is compact and the family $\{Q_{V}:V\in \underset{i\in I}{\tprod }%
\text{\ss }_{i}\}$ has the finite intersection property, we have that $\cap
\{Q_{V}:V\in \underset{i\in I}{\tprod }\text{\ss }_{i}\}\neq \emptyset .$
Take any $\overline{x}\in \cap \{Q_{V}:V\in \underset{i\in I}{\tprod \text{%
\ss }_{i}}\},$ then for each $i\in I$ and each $V_{i}\in \text{\ss }_{i},$ $%
\overline{x}_{i}\in $cl$(B_{i}(\overline{x})+V_{i})\cap X_{i}$ and $%
(A_{i}\cap P_{i})(\overline{x})=\emptyset ;$ but then $\overline{x_{i}}\in $%
cl$(B_{i}(\overline{x}))$ by Lemma\emph{\ 1 }and $(A_{i}\cap P_{i})(%
\overline{x})=\emptyset $ for each $i\in I$ \ so that $\overline{x}$ is an
equilibrium point of $\Gamma $ in X. \medskip
\end{proof}

The last two theorems can be compared with Zheng's theorems 3.1 and 3.2 in
[24] and Zhou's theorems 5 and 6 in [25] where the constraint
correspondences have continuous selections on a closed subset $C_{i}\subset
X $\ \ which contains the set\ $\left\{ x\in X:\left( A_{i}\cap P_{i}\right)
\left( x\right) \neq \emptyset \right\} .$

To find the equilibrium point in Theorem 14, we use Wu's fixed point theorem
for correspondences cl$B_{i}$ which are lower semicontinuous and we need a
non-empty compact metrizable set $D_{i}$ in $X_{i}$ for each $i\in I.$ The
spaces $X_{i}$ are not compact.\medskip

\begin{theorem}
\textit{Let }$\Gamma =(X_{i},A_{i},P_{i},B_{i})_{i\in I}$\textit{\ \ be an
abstract economy, where }$I$\textit{\ is a (possibly uncountable) set of
agents such that for each }$i\in I:$
\end{theorem}

(1)\textit{\ }$X_{i}$\textit{\ is a non-empty convex set in a Hausdorff
locally convex space }$E_{i}$ \textit{and there exists a nonempty compact
metrizable subset }$D_{i}$\textit{\ of }$X_{i}$\textit{\ containing all
values of the correspondences }$A_{i},P_{i}$\textit{\ and }$B_{i}$\textit{;}

(2)\textit{\ }cl$B_{i}$\textit{\ is lower semicontinuous with non-empty
convex values;}

(3) \textit{there exists a }$(n_{i}-1)$\textit{-dimensional simplex }$K_{i}$%
\textit{\ in }$X$\textit{\ and }

$\ \ \ \ \ \ W_{i}:$\textit{\ }$=\left\{ x\in X\text{ / }\left( A_{i}\cap
P_{i}\right) (x)\neq \emptyset \right\} \subset $int$_{X}(K_{i})$\textit{;}

(4)\textit{\ }cl$B_{i}$\textit{\ has \ the (WNQS)-property on }$K_{i}$%
\textit{;}

\textit{Then there exists an equilibrium point }$\overline{x}\in D$ \textit{%
\ for }$\Gamma $\textit{,}$\ i.e.$\textit{, for each }$i\in I$\textit{, }$%
\overline{x}_{i}\in $cl$B_{i}(\overline{x})$\textit{\ and }$A_{i}(\overline{x%
})\cap P_{i}(\overline{x})=\emptyset $\textit{.\medskip }

\begin{proof}
\textit{Proof.} Since cl$B_{i}$ has the WNQS property on $K_{i}$, it follows
that there exists a weakly concave like correspondence $F_{i}:X\rightarrow
2^{D_{i}}$ such that $F_{i}(x)\subset $cl$B_{i}(x)$ and $x_{i}\notin
F_{i}(x) $ for each $x\in K_{i}$.

$K_{i}$ is \emph{a }$(n_{i}-1)$\emph{- }dimensional simplex\emph{, }then,
from the selection theorem, there exists \emph{\ }a continuous function $%
f_{i}:K_{i}\rightarrow D_{i}$ such that $f_{i}(x)\in F_{i}(x)$ for each $%
x\in K_{i}.$ Because $x_{i}\notin F_{i}(x)$ for each $x\in K_{i},$ we have
that $x_{i}\neq f_{i}(x)$ for each $x\in K_{i}.$

Define the correspondence $T_{i}:X\rightarrow 2^{D_{i}}$, by $%
T_{i}(x):=\left\{ 
\begin{array}{c}
\{f_{i}(x)\}\text{, if }x\in K_{i}\text{, } \\ 
\text{cl}B_{i}(x)\text{, if }x\notin K_{i}\text{.}%
\end{array}%
\right. $

$T_{i}$ is lower semicontinuous on $X$ and has closed convex values.

Let $U$ be a closed subset of $\ X_{i}$, then

$U^{^{\prime }}:=\{x\in X$ $\mid $ $T_{i}(x)\subset U\}$\ \ \ =$\{x\in K_{i}$
$\mid $ $T_{i}(x)\subset U\}\cup \{x\in X\setminus K_{i}$ $\mid $ $%
T_{i}(x)\subset U\}$

\ \ \ =$\left\{ x\in K_{i}\text{ }\mid \text{ }f_{i}(x,y)\in U\right\} \cup
\left\{ x\in X\mid \text{ cl}B_{i}(x)\subset U\right\} $

\ \ \ =$((f_{i})^{-1}(U)\cap K_{i})\cup $.$\left\{ x\in X\mid \text{ cl}%
B_{i}(x)\subset U\right\} .$

$U^{^{\prime }}$ is a closed set, because $K_{i}$ is closed, $f_{i}$ is a
continuous function on $K_{i}$ and the set $\left\{ x\in X\mid \text{ cl}%
B_{i}(x)\subset U\right\} $ is closed since cl$B_{i}(x)$ is l.s.c. Then $%
T_{i}$ is lower semicontinuous on $X$ and has non-empty closed convex values.

By Theorem 3 (Wu's fixed-point theorem) applied for the correspondences $%
S_{i}=T_{i}$ and $T_{i}:X\rightarrow 2^{D_{i}},$ there exists $\overline{x}%
\in D$ such that for each $i\in I$, $\overline{x}_{i}\in T_{i}(\overline{x})$%
. If $\overline{x}\in W_{i}$ for some $i\in I$, then $\overline{x}_{i}=f_{i}(%
\overline{x})$, which is a contradiction.

Therefore, $\overline{x}\notin W_{i}$, and hence $(A_{i}\cap P_{i})(%
\overline{x})=\emptyset $. Also, for each $i\in I$, we have $\overline{x}%
_{i}\in T_{i}(\overline{x})$, and then $\overline{x}_{i}\in $cl$B_{i}(%
\overline{x}).\medskip $
\end{proof}

In Theorem 15 the sets $X_{i}$ are non-empty compact convex in locally
convex spaces $E_{i}.$ As in Theorem 13, we first obtain equilibria for $%
\Gamma _{V},$ and then, the proof coincides with the proof of Theorem
13.\medskip

\begin{theorem}
\textit{Let }$\Gamma =(X_{i},A_{i},P_{i},B_{i})_{i\in I}$\textit{\ \ be an
abstract economy, where }$I$\textit{\ is a (possibly uncountable) set of
agents such that for each }$i\in I:$
\end{theorem}

(1)\textit{\ }$X_{i}$\textit{\ is a non-empty compact convex set in a
locally convex space }$E_{i}$\textit{;}

(2)\textit{\ }cl$B_{i}$\textit{\ is upper semicontinuous with non-empty
convex values;}

(3)\textit{\ the set }$W_{i}:$\textit{\ }$=\left\{ x\in X\text{ / }\left(
A_{i}\cap P_{i}\right) (x)\neq \emptyset \right\} $ \textit{is open and
there exists a }

$(n_{i}-1)$\textit{-dimensional simplex }$K_{i\text{ }}$\textit{in }$X$%
\textit{\ such that} $W_{i}\subset $int$_{X}(K_{i})$.

(3)\textit{\ }cl$B_{i}$\textit{\ has \ the (e-WNQS)-property on }$K_{i}$%
\textit{.}

\textit{Then there exists an equilibrium point }$\overline{x}\in X$ \textit{%
\ for }$\Gamma $\textit{,}$\ i.e.$\textit{, for each }$i\in I$\textit{, }$%
\overline{x}_{i}\in \overline{B}_{i}(\overline{x})$\textit{\ and }$A_{i}(%
\overline{x})\cap P_{i}(\overline{x})=\emptyset $\textit{.\medskip }

\begin{proof}
For each\textit{\ }$i\in I$, let \ss $_{i}$ denote the family of all open
convex neighborhoods of zero in $E_{i}.$Let $V=(V_{i})_{i\in I}\in
\tprod\limits_{i\in I}$\ss $_{i}.$ Since cl$B_{i}$ has the e-WNQS property
on $K_{i}$, it follows that there exists a weakly concave like
correspondence $F_{i}^{V_{i}}:X\rightarrow 2^{X_{i}}$ such that $%
F_{i}^{V_{i}}(x)\subset clB_{i}(x)+V_{i}$ and $x_{i}\notin F_{i}^{V_{i}}(x)$
for each $x\in K_{i}$.

$K_{i}$ is a\emph{\ }$(n_{i}-1)$\emph{- }dimensional simplex,\emph{\ }then,
from the selection theorem, there exists \emph{\ }a continuous function $%
f_{i}^{V_{i}}:K_{i}\rightarrow X_{i}$ such that $f_{i}^{V_{i}}(x)\in
F_{i}^{V_{i}}(x)$ for each $x\in K_{i}.$ Because $x_{i}\notin
F_{i}^{V_{i}}(x)$ for each $x\in K_{i},$ we have that $x_{i}\neq
f_{i}^{V_{i}}(x)$ for each $x\in K_{i}.$

Define the correspondence $T_{i}^{V_{i}}:X\rightarrow 2^{X_{i}}$, by

$T_{i}^{V_{i}}(x):=\left\{ 
\begin{array}{c}
\{f_{i}^{V_{i}}(x)\}\text{, \ \ \ \ \ \ \ \ \ \ \ \ \ \ \ \ \ \ \ \ \ \ \ \
if }x\in \text{int}_{X}K_{i}\text{, } \\ 
\text{cl}(B_{i}(x)+V_{i})\cap X_{i}\text{, \ \ \ \ \ \ \ \ \ if }x\in
X\smallsetminus \text{int}_{X}K_{i}\text{;}%
\end{array}%
\right. $

$B_{V_{i}}:X\rightarrow 2^{X_{i}},$ $B_{V_{i}}(x)=$cl$(B_{i}(x)+V_{i})\cap
X_{i}=($cl$B_{i}(x)+$cl$V_{i})\cap X_{i}$ is upper semicontinuous by Lemma 2%
\emph{.}

Let $U$ be an open subset of $\ X_{i}$, then

$U^{^{\prime }}:=\{x\in X$ $\mid $ $T_{i}^{V_{i}}(x)\subset U\}$

\ \ \ =$\{x\in $int$_{X}K_{i}$ $\mid $ $T_{i}^{V_{i}}(x)\subset U\}\cup
\{x\in X\setminus $int$_{X}K_{i}$ $\mid $ $T_{i}^{V_{i}}(x)\subset U\}$

\ \ \ =$\left\{ x\in \text{int}_{X}K_{i}\text{ }\mid \text{ }%
f_{i}^{V_{i}}(x,y)\in U\right\} \cup \left\{ x\in X\mid \text{ }(\text{cl}%
B_{i}(x)+\overline{V_{i}})\cap X_{i}\subset U\right\} $

\ \ \ =$((f_{i}^{V_{i}})^{-1}(U)\cap $int$_{K}K_{i})\cup $.$\left\{ x\in
X\mid \text{ }(\text{cl}B_{i}(x)+\overline{V_{i}})\cap X_{i}\subset
U\right\} .$

$U^{^{\prime }}$ is an open set, because int$_{X}K_{i}$ is open, $%
f_{i}^{V_{i}}$ is a continuous function on $K_{i}$ and the set $\left\{ x\in
X\mid \text{ }(\text{cl}B_{i}(x)+\text{cl}V_{i})\cap X_{i}\subset U\right\} $
is open since $($cl$B_{i}(x)+$cl$V_{i})\cap X_{i}$ is u.s.c. Then $%
T_{i}^{V_{i}}$ is upper semicontinuous on $X$ and has closed convex values.

Define $T^{V}:X\rightarrow 2^{X}$ by $T^{V}(x):=\underset{i\in I}{\prod }%
T_{i}^{V_{i}}(x)$ for each $x\in X$.

$T^{V}$ is an upper semicontinuous correspondence and it has also non-empty
convex closed values.

Since $X$ is a compact convex set, by Fan's fixed-point theorem [7], there
exists $\overline{x}_{V}\in X$ such that $\overline{x}_{V}\in T^{V}(%
\overline{x}_{V})$, i.e., for each $i\in I$, $(\overline{x}_{V})_{i}\in
T_{i}^{V_{i}}(\overline{x}_{V})$. If $\overline{x}_{V}\in $int$_{X}K_{i},$ $(%
\overline{x}_{V})_{i}=f_{i}^{V_{i}}(\overline{x}_{V})$, which is a
contradiction.

Hence $(\overline{x}_{V})_{i}\in $cl$(B_{i}(\overline{x}_{V})+V_{i})\cap
X_{i}$ and $(A_{i}\cap P_{i})(\overline{x}_{V})=\emptyset ,$ i.e. $\overline{%
x}_{V}\in Q_{V}$ where

$Q_{V}=\cap _{i\in I}\{x\in X:$ $x_{i}\in $cl$(B_{i}(x)+V_{i})\cap X_{i}$
and $(A_{i}\cap P_{i})(x)=\emptyset \}.$

Since $W_{i}$ is open, $Q_{V}$ is the intersection of non-empty closed sets,
then it is non-empty, closed in X.

We prove that the family $\{Q_{V}:V\in \underset{i\in I}{\tprod }\text{\ss }%
_{i}\}$ has the finite intersection property.

Let $\{V^{(1)},V^{(2)},...,V^{(n)}\}$ be any finite set of $\underset{i\in I}%
{\tprod \text{\ss }_{i}}$ and let $V^{(k)}=(V_{i}^{(k)})_{i\in I}$, $%
k=1,...,n.$ For each $i\in I$, let $V_{i}=\underset{k=1}{\overset{n}{\cap }}%
V_{i}^{(k)}$, then $V_{i}\in \text{\ss }_{i};$ thus $V=(V_{i})_{i\in I}\in 
\underset{i\in I}{\tprod }\text{\ss }_{i}.$ Clearly $Q_{V}\subset \underset{%
k=1}{\overset{n}{\cap }}Q_{V^{(k)}}$ so that $\underset{k=1}{\overset{n}{%
\cap }}Q_{V^{(k)}}\neq \emptyset .$Since $X$ is compact and the family $%
\{Q_{V}:V\in \underset{i\in I}{\tprod }\text{\ss }_{i}\}$ has the finite
intersection property, we have that $\cap \{Q_{V}:V\in \underset{i\in I}{%
\tprod }\text{\ss }_{i}\}\neq \emptyset .$ Take any $\overline{x}\in \cap
\{Q_{V}:V\in \underset{i\in I}{\tprod }\text{\ss }_{i}\},$ then for each $%
i\in I$ and each $V_{i}\in \text{\ss }_{i},$ $\overline{x}_{i}\in $cl$(B_{i}(%
\overline{x})+V_{i})\cap X_{i}$ and $(A_{i}\cap P_{i})(\overline{x}%
)=\emptyset ;$ but then $\overline{x}_{i}\in $cl$(B_{i}(\overline{x}))$ from
Lemma\emph{\ 1 }and $(A_{i}\cap P_{i})(\overline{x})=\emptyset $ for each $%
i\in I$ \ so that $\overline{x}$ is an equilibrium point of $\Gamma $ in X.
\medskip
\end{proof}

This work was supported by the strategic grant POSDRU/89/1.5/S/58852,
Project "Postdoctoral programme for training scientific researchers"
cofinanced by the European Social Found within the Sectorial Operational
Program Human Resources Development 2007-2013.

\begin{center}
\bigskip
\end{center}

\bigskip

\end{document}